\newtheorem{theorem}{Theorem}[section]
\newtheorem{lemma}[theorem]{Lemma}
\newtheorem{corollary}[theorem]{Corollary}
\newtheorem{proposition}[theorem]{Proposition}
\theoremstyle{definition}
\newtheorem{definition}[theorem]{Definition}
\theoremstyle{remark}
\newtheorem{remark}[theorem]{Remark}
\numberwithin{equation}{section}
\DeclareMathOperator{\ind}{ind}
\begin{document}

\setlength\parskip{0.5em plus 0.1em minus 0.2em}

\title{Exotic diffeomorphisms of $4$-manifolds with $b_+ = 2$}
\author{David Baraglia}
\author{Joshua Tomlin}

\address{School of Computer and Mathematical Sciences, The University of Adelaide, Adelaide SA 5005, Australia}

\email{david.baraglia@adelaide.edu.au}
\email{joshua.tomlin@adelaide.edu.au}


\date{\today}

\begin{abstract}
Let $X$ be a compact, oriented, smooth, simply-connected $4$-manifold. The {\em mapping class group} of $X$ is defined as the group of smooth isotopy classes of diffeomorphisms of $X$. The {\em Torelli group} of $X$ is the subgroup of the mapping class group consisting of smooth isotopy classes of diffeomorphisms which are continuously isotopic to the identity. We prove that for each $n \ge 10$, the Torelli group of $2\mathbb{CP}^2 \# n \overline{\mathbb{CP}^2}$ surjects to $\mathbb{Z}^\infty$. We also prove that the mapping class group of $2 \mathbb{CP}^2 \# 10 \overline{\mathbb{CP}^2}$ is not finitely generated. Our proofs of these results makes use of Seiberg--Witten invariants for $1$-parameter familes of $4$-manifolds and in particular a gluing formula for connected sum families. Since the manifolds we consider have $b_+ = 2$, the chamber structure of the $1$-parameter Seiberg--Witten invariants plays an important role.
\end{abstract}

\maketitle




\section{Introduction}

Let $X$ be a compact, oriented, smooth, simply-connected $4$-manifold. The {\em mapping class group} $M(X)$ of $X$ is defined as the group of smooth isotopy classes of diffeomorphisms of $X$. The {\em Torelli group} $T(X) \subseteq M(X)$ is the subgroup of smooth isotopy classes of diffeomorphisms which are continuously isotopic to the identity. Non-trivial elements of $T(X)$ represent exotic diffeomorphisms in the sense that they are continuously isotopic to the identity but not smoothly isotopic. Thus, the Torelli group $T(X)$ can be seen as a measure of the difference between continuous and smooth isotopy.

It was first established by Ruberman that the Torelli group of a compact, simply-connected $4$-manifold can be non-trivial \cite{rub1,rub2}. Ruberman constructed versions of the Donaldson and Seiberg--Witten invariants for $1$-parameter families of $4$-manifolds and used these invariants to detect non-triviality of elements of the Torelli group. More specifically, his invariant can be used to show that for every $n \ge 2$ and $m \ge 10n+1$, the Torelli group of $2n \mathbb{CP}^2 \# m \overline{\mathbb{CP}^2}$, surjects to $\mathbb{Z}^\infty$ (where $\mathbb{Z}^\infty$ denotes a free abelian group of countably infinite rank). Baraglia--Konno proved a more general gluing formula for the families Seiberg--Witten invariants \cite{bk1}, which for example can be used to show a similar result for the Torelli groups of $2n \mathbb{CP}^2 \# 10n \overline{\mathbb{CP}^2}$ for each odd $n \ge 3$ and also for for $n(S^2 \times S^2) \# nK3$ for each $n \ge 1$. In all of these cases the parity of $b_+$ must be even due to a dimension constraint and there is a further constraint that $b_+ > 2$ so as to avoid the presence of any chamber structure for the families Seiberg--Witten invariants. Thus the smallest possible value of $b_+$ allowed by these methods is $b_+ = 4$. 

The main goal of this paper is to extend the methods of \cite{rub1,rub2,bk1} to the case $b_+ = 2$, using $1$-parameter Seiberg--Witten invariants in the presence of chambers. In a series of papers \cite{tom1,tom2}, the second author has proven a general gluing formula for the families Seiberg--Witten invariants which can be applied even for low values of $b_+$ where chambers are present. The results in this paper are obtained by applying this more general gluing formula to the case of $1$-parameter families. Our first main result is that the Torelli groups of certain simply-connected $4$-manifolds with $b_+ = 2$ can be infinitely generated:

\begin{theorem}\label{thm:main1}
For each $n \ge 10$, the Torelli group of $X_n = 2\mathbb{CP}^2 \# n \overline{\mathbb{CP}^2}$ admits a surjective homomorphism $\Phi : T(X_n) \to \mathbb{Z}^\infty$.
\end{theorem}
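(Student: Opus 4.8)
The plan is to produce, for each $i \ge 1$, a diffeomorphism $f_i \in T(X_n)$, to detect the $f_i$ simultaneously by a sequence of $1$-parameter families Seiberg--Witten invariants, and to read off $\Phi$ from these invariants. For a spin${}^c$ structure $\mathfrak s$ on $X_n$ with formal dimension $d(\mathfrak s) = -1$ and a diffeomorphism $f$ preserving $\mathfrak s$, one has the $1$-parameter families invariant $\mathrm{FSW}(X_n,\mathfrak s, f) \in \mathbb Z$, obtained by counting with sign the Seiberg--Witten solutions along a generic path of metrics and perturbations from a reference $(g_0,\eta_0)$ to $(f^*g_0, f^*\eta_0)$. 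Since $b_+(X_n) = 2$ the reducible locus in the space of $(g,\eta)$ has codimension $2$, so a generic path avoids it, but the complement of a codimension-$2$ set need not be simply connected, and $\mathrm{FSW}(X_n,\mathfrak s, f)$ depends on a chamber, namely the homotopy class of the path rel the reducible locus. I would fix this chamber data once and for all and record, following \cite{rub1,rub2,bk1}, that $\mathrm{FSW}(X_n,\mathfrak s, -)$ is then invariant under smooth isotopy and restricts to a homomorphism $T(X_n) \to \mathbb Z$: the cocycle term present in general vanishes on $T(X_n)$, where the action on $H^2$, hence on the chamber data, is trivial. A families-compactness argument (the analogue of the finiteness of basic classes) shows that for fixed $f$ only finitely many $\mathfrak s$ yield a nonzero invariant, so these homomorphisms assemble into $\Phi \colon T(X_n) \to \bigoplus_{j} \mathbb Z$.

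For the diffeomorphisms, I would fix an infinite family $Z_1, Z_2, \dots$ of pairwise non-diffeomorphic smooth $4$-manifolds, all homeomorphic to $\mathbb{CP}^2 \# 9\overline{\mathbb{CP}^2}$, with pairwise distinct sets of small-chamber Seiberg--Witten basic classes and each dissolving after a single stabilization, $Z_i \# \overline{\mathbb{CP}^2} \cong \mathbb{CP}^2 \# 10 \overline{\mathbb{CP}^2}$; the Dolgachev surfaces $E(1)_{2, 2i+1}$ provide such a family. For $n \ge 10$ set $Q = \mathbb{CP}^2 \# (n-10)\overline{\mathbb{CP}^2}$ (this is where the hypothesis $n \ge 10$ enters); using the dissolving diffeomorphisms one gets
\[
X_n \cong Q \# \bigl( Z_i \# \overline{\mathbb{CP}^2} \bigr) \cong Q \# \bigl( (\mathbb{CP}^2 \# 9\overline{\mathbb{CP}^2}) \# \overline{\mathbb{CP}^2} \bigr)
\]
for every $i$. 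Fixing diffeomorphisms realizing these identifications together with a dissolving diffeomorphism $g_i \colon (\mathbb{CP}^2 \# 9\overline{\mathbb{CP}^2}) \# \overline{\mathbb{CP}^2} \to Z_i \# \overline{\mathbb{CP}^2}$, Ruberman's connected-sum construction \cite{rub1,rub2} yields a self-diffeomorphism $f_i$ of $X_n$ built from $\mathrm{id}_Q \# g_i$; using Wall's stable realization theorem to correct by diffeomorphisms of the summands, I would arrange that $f_i$ acts as the identity on $H_2(X_n)$. By the theorem of Perron and Quinn, a self-homeomorphism of a closed simply-connected $4$-manifold inducing the identity on $H_2$ is topologically isotopic to the identity, so $f_i \in T(X_n)$.

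The main work, and the step I expect to be the genuine obstacle, is to evaluate $\mathrm{FSW}(X_n, \mathfrak s_j, f_i)$ for a suitable sequence of spin${}^c$ structures $\mathfrak s_j$, which I would take to restrict to a fixed class on $Q$ and to the $j$-th basic class of $Z_j$ on the other summand; a short index computation (again using $n \ge 10$) shows such $\mathfrak s_j$ exist with $d(\mathfrak s_j) = -1$. Since the mapping torus of $f_i$ over $S^1$ is, away from the connected-sum neck, a product family modelled on the dissolving diffeomorphism $g_i$, I would apply the connected-sum gluing formula of the second author \cite{tom1,tom2} in its $1$-parameter form, taking care of the $b_+ = 1$ chambers on the two summands, to reduce $\mathrm{FSW}(X_n, \mathfrak s_j, f_i)$ to ordinary, chamber-dependent Seiberg--Witten invariants of $Q$ and of the summand containing $Z_i$. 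The contributions of $Q$ and of the standard piece $\mathbb{CP}^2 \# 9\overline{\mathbb{CP}^2}$ are independent of $i$, and what remains is governed by a difference of chamber-dependent Seiberg--Witten invariants which vanishes for the standard manifold but is nontrivial for $Z_i$; the delicate point throughout is keeping track of exactly which chambers of the pieces appear, since it is there that the vanishing of the standard contribution and the non-vanishing via the $b_+ = 1$ wall-crossing formula must be verified. Because the $Z_i$ have pairwise distinct basic classes, one can, after reindexing, arrange the matrix $\bigl( \mathrm{FSW}(X_n, \mathfrak s_j, f_i) \bigr)_{i,j \ge 1}$ to be lower triangular with nonzero diagonal entries.

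Finally, the diagonal non-vanishing shows that $\{\Phi(f_i)\}_{i \ge 1}$ generates a subgroup of $\bigoplus_j \mathbb Z$ of infinite rank; being a subgroup of a free abelian group, this image is itself free abelian, of countably infinite rank, so composing $\Phi$ with an isomorphism of its image onto $\mathbb Z^\infty$ produces the required surjective homomorphism $T(X_n) \to \mathbb Z^\infty$. Everything outside the chamber bookkeeping in the previous paragraph — the construction of the $f_i$, the verification that $f_i \in T(X_n)$, the existence and index count of the $\mathfrak s_j$, and this final assembly — is routine given \cite{rub1,rub2,bk1,tom1,tom2}.
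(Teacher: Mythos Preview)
Your overall strategy is close to the paper's, but there is a concrete factual error that breaks the construction as written. The claim that the Dolgachev surfaces dissolve after a single $\overline{\mathbb{CP}^2}$ stabilization, i.e.\ that $E(1)_{2,2i+1} \# \overline{\mathbb{CP}^2} \cong \mathbb{CP}^2 \# 10\,\overline{\mathbb{CP}^2}$, is false. The blowup formula shows that $E(1)_{2,2i+1} \# \overline{\mathbb{CP}^2}$ inherits the nontrivial Seiberg--Witten basic classes of the Dolgachev surface, so these manifolds are homeomorphic but \emph{not} diffeomorphic to $\mathbb{CP}^2 \# 10\,\overline{\mathbb{CP}^2}$. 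Consequently your dissolving diffeomorphism $g_i$ does not exist and the construction of the $f_i$ collapses.

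The correct stabilization, and the one the paper uses, is by $S^2 \times S^2$: Mandelbaum--Moishezon gives that $X \# \mathbb{CP}^2$ completely decomposes for simply-connected elliptic $X$, and combined with Wall's theorem (non-spin case) one gets $E(1)_{m,n} \# (S^2 \times S^2) \cong 2\mathbb{CP}^2 \# 10\,\overline{\mathbb{CP}^2}$. The paper then builds the Torelli elements not from the dissolving map directly but from a fixed diffeomorphism $\rho$ of the $S^2 \times S^2$ summand acting as $-1$ on $H^2$ (so $sgn_+(\rho)=-1$): setting $f_d = \psi_d(id_{E(1)_{2,d+2}} \# \rho)\psi_d^{-1}$ and $f_0 = id_{E(1)} \# \rho$, the composite $t_d=f_d f_0$ lies in $T(X_{10})$. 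The gluing formula for the $S^2\times S^2$ connected sum then reduces $SW^0_{X_{10},\mathfrak{s}_d}(f_d)$ to the ordinary $SW^0(E(1)_{2,d+2},\mathfrak{s}_{can})=1$, with the nonzero contribution coming precisely from the nonorientability of $H^+$ over $S^1$ forced by $sgn_+(\rho)=-1$; this mechanism is absent from your $id_Q \# g_i$ picture. For $n>10$ the paper simply applies the constant-chamber blowup formula inductively, which is cleaner than a uniform $Q$-splitting. Once you replace $\overline{\mathbb{CP}^2}$ by $S^2\times S^2$ and incorporate the role of $\rho$, your outline essentially converges to the paper's argument; your vaguely specified ``fixed chamber data'' is made precise there via the constant and zero chambers, which are shown to agree.
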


Prior to this result it had been shown by Konno--Mallick--Taniguchi \cite{kmt} that $2\mathbb{CP}^2 \# m \overline{\mathbb{CP}^2}$ has an exotic diffeomorphism for each $m \ge 11$. Qiu proves that $2\mathbb{CP}^2 \# 10 \overline{\mathbb{CP}^2}$ admits an exotic diffeomorphism with infinite order in the Torelli group \cite{qiu}. Qiu's method is similar to the one used in this paper, although the conclusion is not as strong.

Our second main result concerns infinite generation of the mapping class group $M(X)$ (by infinite generation, we mean to say that $M(X)$ is not finitely generated). Note that infinite generation of $M(X)$ does not follow from infinite generation of $T(X)$, because a finitely generated group can have infinitely generated subgroups.

\begin{theorem}\label{thm:main2}
The mapping class group of $X = 2\mathbb{CP}^2 \# 10 \overline{\mathbb{CP}^2}$ is not finitely generated. More precisely, there is an index $2$-subgroup $M_+(X) \subseteq M(X)$ and a surjective homomorphism $\Phi : M_+(X) \to \mathbb{Z}^\infty$.
\end{theorem}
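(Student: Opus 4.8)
The plan is to upgrade Theorem \ref{thm:main1} to a statement about the full mapping class group $M(X)$ for the specific manifold $X = 2\mathbb{CP}^2 \# 10\overline{\mathbb{CP}^2}$, at the boundary case $n=10$. The key structural observation is that $M(X)$ acts on $H^2(X;\mathbb{Z})$ preserving the intersection form, and Freedman/Quinn-type results together with the work of Wall identify the image of this action; the Torelli group $T(X)$ is (close to) the kernel. To get an honest homomorphism out of $M(X)$ — rather than just its Torelli subgroup — one should enlarge the domain as much as possible while keeping the target abelian. The natural candidate is the index-$2$ subgroup $M_+(X) \subseteq M(X)$ consisting of mapping classes that act on $H^+ = H^2(X;\mathbb{R})$ (a maximal positive-definite subspace, here $2$-dimensional) preserving orientation; equivalently, those that act on the set of chambers of the families Seiberg--Witten theory in an orientation-compatible way. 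The point of restricting to $M_+(X)$ is precisely the chamber structure emphasized in the introduction: the $1$-parameter Seiberg--Witten invariant for $X$ with $b_+ = 2$ depends on a choice of chamber (a homotopy class of paths in the positive cone), and a diffeomorphism reversing orientation on $H^+$ would swap or scramble chambers in a way that obstructs a well-defined $\mathbb{Z}$-valued count.

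First I would fix an infinite family of $\mathrm{Spin}^c$ structures $\mathfrak{s}_k$ on $X$ (the same ones used to prove Theorem \ref{thm:main1}) and, for each $k$, define a homomorphism $\Phi_k : M_+(X) \to \mathbb{Z}$ by sending a mapping class $[f]$ to the $1$-parameter families Seiberg--Witten invariant of the mapping torus of a representative $f$, computed with respect to $\mathfrak{s}_k$ and a fixed reference chamber. Here one must check: (i) well-definedness, i.e. independence of the representative $f$ within its smooth isotopy class and of the auxiliary metric/perturbation path within the fixed chamber — this is the standard invariance of the $1$-parameter invariant, valid because $b_+ = 2 > 1$ so generic $1$-parameter families avoid reducibles and wall-crossing is controlled; (ii) that $f$ fixes $\mathfrak{s}_k$ (up to the action on $\mathrm{Spin}^c$ structures) — since the $\mathfrak{s}_k$ can be chosen with distinct Chern classes that are fixed by the full stabilizer we care about, or by passing to a finite-index subgroup, this is arranged; and (iii) the homomorphism property, which follows from the composition/gluing behaviour of the $1$-parameter invariant under composition of diffeomorphisms (the mapping torus of $fg$ is built from those of $f$ and $g$), again using the gluing formula of \cite{tom1,tom2}. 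Assembling $\Phi = \bigoplus_k \Phi_k$ into a map to $\prod_k \mathbb{Z}$ and checking that it lands in the direct sum $\bigoplus_k \mathbb{Z} = \mathbb{Z}^\infty$ (only finitely many $\Phi_k$ are nonzero on any fixed mapping class, because a single diffeomorphism has bounded "complexity" relative to the growing data $\mathfrak{s}_k$) gives the homomorphism $\Phi : M_+(X) \to \mathbb{Z}^\infty$.

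The surjectivity is where the actual Seiberg--Witten computation enters, and it should reduce to Theorem \ref{thm:main1}: the generators of $\mathbb{Z}^\infty$ are realized by explicit diffeomorphisms $g_k$ constructed exactly as in the proof of Theorem \ref{thm:main1} — these are Torelli diffeomorphisms, hence lie in $M_+(X)$ — with $\Phi_j(g_k) = \delta_{jk}$ computed via the connected-sum gluing formula, where the chamber-dependence is pinned down by a local model (a diffeomorphism of a small summand, e.g. supported near an embedded sphere or a $\overline{\mathbb{CP}^2}$ summand) whose contribution is a known nonzero count. So the same family $\{g_k\}$ that proves Theorem \ref{thm:main1} witnesses surjectivity of $\Phi$ on $M_+(X)$, giving $M_+(X) \twoheadrightarrow \mathbb{Z}^\infty$, and since $M_+(X)$ has index $2$ in $M(X)$ and admits a surjection onto an infinitely generated group, $M(X)$ itself cannot be finitely generated (a finite-index subgroup of a finitely generated group is finitely generated).

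I expect the main obstacle to be the chamber bookkeeping at the boundary value $n = 10$. Because $b_+ = 2$, the families invariant is not a single integer but a function on chambers, and across a wall-crossing it jumps by a correction term; to get a genuine homomorphism $\Phi_k$ one must verify that the reference chamber can be chosen functorially under composition and under the action of $M_+(X)$, so that the jump terms either cancel or vanish. This is exactly why the orientation-on-$H^+$ condition (defining $M_+(X)$) is needed and why one cannot expect a homomorphism on all of $M(X)$: an orientation-reversing element on $H^+$ would conjugate the reference chamber to an incompatible one. Making the dependence of the gluing formula on the path of metrics fully explicit in the $1$-parameter, $b_+ = 2$ setting — tracking the reducible locus, which is now a real-codimension-$2$ condition cut out inside a $1$-parameter family, and confirming the wall-crossing formula matches the topological constraint $\dim = 0$ that forces $n = 10$ — is the technical heart, and it is precisely what the gluing formula of \cite{tom1,tom2} is designed to supply.
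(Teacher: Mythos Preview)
There is a genuine gap in step (ii). You propose to define $\Phi_k(f)$ as the families Seiberg--Witten invariant of the mapping torus of $f$ with respect to a fixed spin$^c$-structure $\mathfrak{s}_k$, but this requires $f(\mathfrak{s}_k) = \mathfrak{s}_k$: otherwise there is no families spin$^c$-structure on $E(f)$ restricting to $\mathfrak{s}_k$ on a fibre, and the invariant is simply undefined. For a general $f \in M_+(X)$ this fails badly. By Wall's theorems the map $M(X) \to \mathrm{Aut}(H^2(X;\mathbb{Z}))$ is surjective, and the orthogonal group of $\mathbb{Z}^{2,10}$ acts transitively on characteristics of fixed square and fixed divisibility. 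Hence the $M_+(X)$-orbit of any $\mathfrak{s}_k \in \mathcal{S}(X)$ is infinite, and the stabiliser of $\mathfrak{s}_k$ has \emph{infinite} index in $M_+(X)$, not finite index. Your proposed escape hatch (``passing to a finite-index subgroup'') therefore does not work, and the maps $\Phi_k$ are not defined on $M_+(X)$.

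The paper's remedy is precisely to absorb this orbit: for each odd $q>0$ one sets $\mathcal{O}_q = \{\mathfrak{s} \in \mathcal{S}(X) : c(\mathfrak{s}) \text{ has divisibility } q\}$, which is a full $M(X)$-orbit, and defines
\[
SW_{X,\mathcal{O}_q}(f) \;=\; \sum_{\mathfrak{s} \in \mathcal{O}_q} SW_{X,\mathfrak{s}}(h_0, f(h_0)),
\]
a finite sum by compactness. Because $f$ merely permutes the terms, this \emph{is} a homomorphism $M_+(X) \to \mathbb{Z}$. The chamber bookkeeping you worry about is handled not by an abstract ``reference chamber'' but by the zero chamber, available here since $b_-(X)=10$ forces $c(\mathfrak{s})^2 = 0$ with $c(\mathfrak{s}) \neq 0$; concretely one works with ``good'' regular pairs $(g,\eta)$, whose space is contractible, so the relative invariant $SW_{X,\mathfrak{s}}(h_0,h_1)$ is unambiguous. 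Finally, one does not get $\Phi_j(g_k) = \delta_{jk}$ on the nose; the paper only shows the image of $\bigoplus_q SW_{X,\mathcal{O}_q}$ has infinite rank, by evaluating on diffeomorphisms built from $E(1)_{2,2n+1} \# (S^2 \times S^2) \cong X$ as in \cite{bar}.
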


Note that by Schreier's lemma \cite{ser} every finite index subgroup of a finitely generated group is finitely generated. So the infinite generation of $M_+(X)$ implies the infinite generation of $M(X)$.

The first examples of compact, simply-connected $4$-manifolds whose mapping class groups are not finitely generated were given by Baraglia \cite{bar} and Konno \cite{kon}, namely $M(X)$ is not finitely generated for $X = 2n \mathbb{CP}^2 \# 10n \overline{\mathbb{CP}^2}$ where $n \ge 3$ is odd and also for $X = n (S^2 \times S^2) \# nK3$, $n \ge 1$. Note that these are precisely the manifolds of the form $E(m) \# (S^2 \times S^2)$, $m \ge 2$. Theorem \ref{thm:main2} says the same result is true for the case $m=1$, that is, for $X = E(1) \# (S^2 \times S^2) = 2\mathbb{CP}^2 \# 10 \overline{\mathbb{CP}^2}$.

\subsection{Outline of the proofs of the main results}
To each diffeomorphism $f \in M(X)$, one can form the mapping cylinder $E(f)$. This is the $1$-parameter family of $4$-manifolds obtained from $[0,1] \times X$ by identifying the ends via $f$. If $\mathfrak{s}$ is a spin$^c$-structure which is preserved by $f$ and for which the expected dimension of the families Seiberg--Witten moduli space for $E(f)$ is zero, then one obtains a numerical invariant by counting the number of solutions of the Seiberg--Witten equations for the family $E(f)$. When $b_+ = 2$, this invariant depends on the choice of chamber and so is not strictly an invariant of $f$ alone. However under certain circumstances we find that a distinguished choice of chamber exists, and so we obtain invariants. More specifically, there are two cases that we consider:
\begin{itemize}
\item[(1)]{{\bf The constant chamber:} assume that $f \in T(X)$. Then $f$ acts trivially on $H^2(X ; \mathbb{R})$ and so the local system over $S^1$ whose fibres are $H^2$ of the fibres of $E(f)$ has trivial monodromy. This leads to a trivialisation (unique up to homotopy) of the bundle $\mathcal{H}^+ \to S^1$ whose fibres are $H^+$ of the fibres of $E(f)$. The constant chamber is the chamber which corresponds to the homotopy class of a constant section of $\mathcal{H}^+$ under the above trivialisation.}
\item[(2)]{{\bf The zero chamber:} assume that $c(\mathfrak{s})^2 \ge 0$ and $c(\mathfrak{s})$ is not torsion. Then there is a well-defined chamber corresponding to taking the self-dual $2$-form perturbation of the Seiberg--Witten equations to be zero.}
\end{itemize}

These two chambers are shown to coincide when they are both defined. Corresponding to the constant and zero chambers are families Seiberg--Witten invariants $SW^c_{X , \mathfrak{s}}(f)$ and $SW^0_{X , \mathfrak{s}}(f)$ depending only on $(X , \mathfrak{s})$ and the isotopy class of $f$. In particular, the constant chamber invariants define homomorphisms $SW^c_{X , \mathfrak{s}} : T(X) \to \mathbb{Z}$. Compactness properties of the Seiberg--Witten equations implies that for any given $f \in T(X)$, the invariants $SW^c_{X , \mathfrak{s}}(f)$ are non-zero for only finitely many spin$^c$-structures. Thus we obtain a homomorphism
\[
\Phi : T(X) \to \bigoplus_{\mathfrak{s}} \mathbb{Z}
\]
where the sum is over spin$^c$-structures for which the corresponding families Seiberg--Witten moduli space is zero dimensional. The proof of Theorem \ref{thm:main1} follows by showing that the image of $\Phi$ has infinite rank. For this we need to construct an infinite sequence of spin$^c$-structures $\{ \mathfrak{s}_n\}$ and diffeomorphisms $\{ t_n \}$ for which $SW^c_{X , \mathfrak{s}_n}( t_n ) \neq 0$. The diffeomorphisms $t_n$ are constructed in a similar fashion to \cite{rub1}, \cite{bk1}, making use of diffeomorphisms $E(1)_{p,q} \# (S^2 \times S^2) \cong E(1) \# (S^2 \times S^2)$.

\noindent{\bf Acknowledgments.} D. Baraglia was financially supported by an Australian Research Council Future Fellowship, FT230100092.

\section{$1$-parameter Seiberg--Witten invariants for $b_+ = 2$}

\subsection{Seiberg--Witten invariants for $1$-parameter families}

Let $X$ be a compact, oriented, smooth, simply-connected $4$-manifold. Let $Diff(X)$ denote the group of orientation preverving diffeomorphisms of $X$ with the $\mathcal{C}^\infty$-topology. $Diff(X)$ acts on $H^2(X;\mathbb{Z})$ by inverse pullback. We let $\Gamma(X) \subseteq Aut( H^2(X ; \mathbb{Z}))$ denote the group of automorphisms of $H^2(X ; \mathbb{Z})$ that can be realised by diffeomorphisms. The mapping class group $M(X)$ of $X$ is defined as the group of smooth isotopy classes of orientation preserving diffeomorphisms of $X$, thus $M(X) = \pi_0(Diff(X))$ is the group of components of $Diff(X)$. The action of $Diff(X)$ on $H^2(X ; \mathbb{Z})$ factors though $M(X)$ giving a surjective homomorphism $M(X) \to \Gamma(X)$. The Torelli group $T(X)$ is defined to be the kernel of $M(X) \to \Gamma(X)$, so we have a short exact sequence
\[
1 \to T(X) \to M(X) \to \Gamma(X) \to 1.
\]
From work of Quinn \cite{qui}, a diffeomorphism of $X$ is continuously isotopic to the identity if and only if it acts trivially on $H^2(X ; \mathbb{Z})$. Thus $T(X)$ is the group of smooth isotopy classes of diffeomorphisms which are continuously isotopic to the identity. Non-trivial elements of $T(X)$ represent exotic diffeomorphisms in the sense that they are continuously isotopic to the identity but not smoothly isotopic.

Given $f \in Diff(X)$, we can form the mapping cylinder $E(f) = [0,1] \times X /\! \sim$ where $(0,x) \sim (1,f(x))$. The projection $\pi : E(f) \to S^1 = \mathbb{R}/\mathbb{Z}$ given by $\pi(t,x) = t$ makes $E(f)$ into a smooth $1$-parameter family of $4$-manifolds with fibres diffeomorphic to $X$ and with base space $B = S^1$. For a spin$^c$-structure $\mathfrak{s}$ on $X$, let
\[
d(\mathfrak{s}) = \frac{ c(\mathfrak{s})^2 - \sigma(X) }{4} - b_+(X) - 1
\]
denote the expected dimension of the Seiberg--Witten moduli space for $(X,\mathfrak{s})$. Let $\mathcal{S}(X)$ denote the set of isomorphism classes of spin$^c$-structures on $X$ satisfying $d(\mathfrak{s}) = -1$. Let $\iota_0 : X \to E(f)$ be the map $\iota_0(x) = (0,x)$. Thus $\iota_0$ is the inclusion map for the fibre of $E(f)$ lying over $0$. If $\mathfrak{s} \in \mathcal{S}(X)$ and $f(\mathfrak{s}) = \mathfrak{s}$, then there exists a spin$^c$-structure $\widehat{\mathfrak{s}}$ on the vertical tangent bundle of $E(f)$ satisfying $\iota_0^*\widehat{\mathfrak{s}} = \mathfrak{s}$. From the Leray--Serre spectral sequence it follows that $\iota_0^* : H^2(E(f) ; \mathbb{Z}) \to H^2(X ; \mathbb{Z})$ is injective and therefore $\widehat{\mathfrak{s}}$ is uniquely determined. Let $\Pi$ denote the set of pairs $h = (g,\eta)$, where $g$ is a Riemannian metric on $X$ and $\eta$ is a $g$-self-dual $2$-form on $X$. Note that $Diff(X)$ acts on $\Pi$ by inverse pullback.

Given a path $h : [0,1] \to \Pi$ where $h_1 = f(h_0)$, we can regard $h_t = (g_t , \eta_t)$ as a family of fibrewise metrics $\{ g_t \}$ and fibrewise self-dual $2$-forms $\{ \eta_t \}$ for the family $E(f)$. Then we can consider the families Seiberg--Witten moduli space for the family $E(f)$ with respect to the families spin$^c$-structure $\widehat{\mathfrak{s}}$ and the family $h_t = (g_t , \eta_t)$ of metrics and $2$-form perturbations. We denote this moduli space by $\mathcal{M}( X , \mathfrak{s} , h_t )$ (recall that $\widehat{\mathfrak{s}}$ is uniquely determined by $\mathfrak{s}$, so the notation is justified).

Let $H^+(X)_g$ denote the space of $g$-harmonic self-dual $2$-forms and similarly define $H^-(X)_g$. If $\eta$ is any self-dual $2$-form, let $[\eta]_g \in H^+(X)_g$ denote the $L^2$-orthogonal projection of $\eta$ to $H^+(X)_g$. If $c \in H^2(X ; \mathbb{R})$ is any cohomology class, let $c^{+_g} \in H^+(X)_g$ denote the $g$-self-dual part of the unique $g$-harmonic representative of $c$. Note that the Hodge star $*_g$ defined by $g$ commutes with $L^2$-orthogonal projection to harmonic forms and induces an involution on $H^2(X ; \mathbb{R})$ whose $\pm 1$-eigenspaces are $H^{\pm}(X)_g$. So the map $[ \; . \;  ]^{+_g} : H^2(X ; \mathbb{R}) \to H^+(X)_g$ is just the projection map $H^2(X ; \mathbb{R}) \cong H^+(X)_g \oplus H^-(X)_g \to H^+(X)_g$. The Seiberg--Witten equations on $X$ for $\mathfrak{s}$ with respect to a metric $g$ and self-dual $2$-form perturbation $\eta$ admits reducible solutions if and only if $[\eta]_g = [ \pi c(\mathfrak{s})]^{+_g}$. The set of $(g,\eta) \in \Pi$ such that $[\eta]_g = [ \pi c(\mathfrak{s})]^{+_g}$ will be called the {\em wall}.

Let $\Pi^*_{\mathfrak{s}}$ denote the set of elements $h = (g , \eta) \in \Pi$ such that $\eta$ does note lie on the wall. The space $\Pi$ is contractible. Assuming $b_+(X) > 0$, the space $\Pi^*_{\mathfrak{s}}$ is homotopy equivalent to a sphere of dimension $b_+(X) - 1$. In the families setting, suppose we have a path $h : [0,1] \to \Pi_{\mathfrak{s}}^*$ such that $h_1 = f(h_0)$. Then the families moduli space $\mathcal{M}( X , \mathfrak{s} , h_t )$ has no reducibles. If $h$ is sufficiently generic, then $\mathcal{M}( X , \mathfrak{s} , h_t )$ is a compact manifold of dimension $d(\mathfrak{s}) + 1 = 0$. The mod $2$ families Seiberg--Witten invariant of $E(f)$ is given by counting the number of points of $\mathcal{M}( X , \mathfrak{s} , h_t )$ mod $2$.

The space of oriented, maximal positive definite subspaces of $H^2(X ; \mathbb{R})$ has two connected components. For an orientation preserving diffeomorphism $f$ of $X$, we let $sgn_+(f)$ equal $1$ or $-1$ according to whether the action of $f$ on $H^2(X ; \mathbb{R})$ preserves or exchanges the two component. If $sgn_+(f) = 1$, then $\mathcal{M}( X , \mathfrak{s} , h_t )$ is oriented and we obtain an integer families Seiberg--Witten invariant by taking a signed count of the points of $\mathcal{M}( X , \mathfrak{s} , h_t )$.

If $b_+(X) > 2$, then the mod $2$ families Seiberg--Witten invariant does not depend on the choice of $h$ and thus gives an invariant $SW_{X , \mathfrak{s}}(f) \in \mathbb{Z}_2$ which depends only on $(X,\mathfrak{s})$ and the diffeomorphism $f$. If in addition we have $sgn_+(f) = 1$, then the same is true for the integer families Seiberg--Witten invariant, giving an invariant $SW_{X , \mathfrak{s} , \mathbb{Z}}(f) \in \mathbb{Z}$. If $b_+(X) = 2$, which is the case of interest in this paper, the families Seiberg--Witten invariants depend on the choice of chamber. Let $P_{\mathfrak{s}}(f)$ denote the set of continuous paths $h : [0,1] \to \Pi_{\mathfrak{s}}^*$ such that $h_1 = f(h_0)$. A chamber is a homotopy class of such paths, or equivalently, a path connected component of $P_{\mathfrak{s}}(f)$. If $h \in P_{\mathfrak{s}}(f)$, write $[h] \in \pi_0( P_{\mathfrak{s}}(f))$ for the corresponding chamber. Thus the (mod $2$) $1$-parameter families Seiberg--Witten invariants in the case $b_+(X) = 2$ takes the form of an invariant
\[
SW_{X , \mathfrak{s}}^{\phi}(f) \in \mathbb{Z}_2
\]
where $\mathfrak{s} \in \mathcal{S}(X)$, $f \in Diff(X)$, $f(\mathfrak{s}) = \mathfrak{s}$ and $\phi \in \pi_0( P_{\mathfrak{s}}(f))$ is a chamber. If in addition we have $sgn_+(f) = 1$, then the integer $1$-parameter families Seiberg--Witten invariant is defined
\[
SW_{X  ,\mathfrak{s} , \mathbb{Z}}^{\phi}(f) \in \mathbb{Z}.
\]

\subsection{Special chambers}

One of the difficulties in using families Seiberg--Witten invariants to detect exotic diffeomorphisms is that the space $P_{\mathfrak{s}}(f)$ depends on the diffeomorphism $f$ and thus we do not have a uniform way to describe chambers independently of $f$. However in certain circumstances we have naturally defined chambers, which we call the constant chamber and the zero chamber. These chambers can also be considered for higher-dimensional families, so we consider them in more generality.

Let $B$ be a compact, connected smooth manifold. By a smooth family of $4$-manifolds over $B$ we mean a smooth fibre bundle $\pi : E \to B$ whose fibres are compact, oriented, smooth $4$-manifolds. A fibrewise spin$^c$-structure on $E \to B$ is a spin$^c$-structure $\widehat{\mathfrak{s}}$ on the vertical tangent bundle $T(E/B) = Ker( \pi_* : TE \to TB )$. Let $H^2(E/B) \to B$ denote the vector bundle whose fibre over $b \in B$ is given by $H^2( X_b ; \mathbb{R})$, where $X_b = \pi^{-1}(b)$. Equivalently $H^2(E/B)$ is the local system $H^2(E/B) = R^2 \pi_* \mathbb{R}$. Let $g$ denote a metric on $T(E/B)$. We can regard $g$ as a smoothly varying family $\{ g_b \}_{b \in B}$ of metrics on the fibres $\{ X_b \}_{b \in B}$ of $E$. Such a metric defines a subbundle $H^+_g(E/B) \subseteq H^2(E/B)$ whose fibre over $b \in B$ is given by $H^+_{g_b}( X_b ; \mathbb{Z})$, the subgroup of $H^2(X_b ; \mathbb{R})$ corresponding to harmonic self-dual $2$-forms. The definition of $H^+_g(E/B)$ depends on the choice of $g$, but any two choices yield isomorphic bundles. Indeed any two maximal positive definite subbundles of $H^2(E/B)$ are isomorphic. We obtain a section $w_{\widehat{\mathfrak{s}}} : B \to H^+_g(E/B)$ by setting $w_{\widehat{\mathfrak{s}}}(b) =  \pi [ c(\mathfrak{s}_b) ]^{+_{g_b}}$, where $\mathfrak{s}_b$ denotes the restriction of $\widehat{\mathfrak{s}}$ to $X_b$. The section $w_{\widehat{\mathfrak{s}}}$ should be thought of as the ``wall" defining the locus of self-dual $2$-form perturbations for which the Seiberg--Witten equations has reducible solutions. 

\begin{definition}
A {\em chamber} for $E$ with respect to $\widehat{\mathfrak{s}}$ is a connected component of the space of pairs $(g , \eta )$, where $g$ is a metric on $T(E/B)$ and $\eta$ is a family of self-dual $2$-forms such that for all $b \in B$, $[\eta_b ]_{g_b} \neq w_{\widehat{\mathfrak{s}}}(b)$. Here $b \mapsto [\eta_b]_{g_b}$ is the section of $H^+_g(E/B)$ given by taking the $g$-harmonic projection of $\eta$.
\end{definition}

Now that we have a general definition of chambers, we introduce two special cases: constant chambers and zero chambers.

\noindent {\bf Constant chambers.} Fix a basepoint $p \in B$. Assume that the monodromy representation of the family $\pi : E \to B$ on $H^2(X_p ; \mathbb{R})$ is trivial. Equivalently, the local system $R^2 \pi_* \mathbb{R}$ is trivial. Parallel translation gives a canonical trivialisation $H^2(E/B) \cong H^2(X_p ; \mathbb{R})$. Let $H \subseteq H^2(X_p ; \mathbb{R})$ be a maximal positive definite subspace. Let $H^\perp$ be the orthogonal complement of $H$ with respect to the intersection form, so $H^2(X_p ; \mathbb{R}) \cong H \oplus H^\perp$. Let $\rho_H : H^2(X_p ; \mathbb{R}) \to H$ be projection to the first factor. Then the composition with the inclusion
\[
\psi : H^+_g(E/B) \to H^2(E/B) \cong H^2(X_p ; \mathbb{R}) \buildrel \rho \over \longrightarrow H
\]
is an isomorphism from $H^+_g(E/B)$ to the trivial bundle with fibre $H$. This follows because the kernel of $\rho$ is negative definite and the intersection of a positive definite subspace and a negative definite subspace of $H^2(X_p ; \mathbb{R})$ is zero. Now let $v \in H$. Then $v$ defines a section of $H^+_g(E/B)$ by taking $\eta = \psi^{-1}(v)$. If $v$ is sufficiently large, then $v$ is disjoint from $w_{\widehat{\mathfrak{s}}}$ and so defines a chamber which we call the {\em constant chamber} (this chamber was also defined in \cite{bar2} where we called it the canonical chamber). It can be shown that this chamber does not depend on the choice of $g$, $p$ or $v$ provided $b_+(X) > 1$ (see \cite[Lemma 2.1]{bar2}). It also does not depend on the choice of $H$ since the space of maximal positive definite subspaces is connected. We use the superscript $c$ to denote families Seiberg--Witten invariants for the constant chamber.

\noindent {\bf Zero chambers.} Fix a basepoint $p \in B$ and set $X = X_p$. Set $\mathfrak{s} = \mathfrak{s}_p = \widehat{\mathfrak{s}}|_{X_p}$. Then $\mathfrak{s}$ is a spin$^c$-structure on $X$ and as $b$ varies over $B$, the pairs $(X_b , \mathfrak{s}_b)$ are all diffeomorphic to one another. Assume that $c(\mathfrak{s})^2 \ge 0$ and that $c(\mathfrak{s})$ is not torsion. Then it follows that for any family of metrics $\{ g_b \}$, we have that $[ c(\mathfrak{s}_b) ]^{+_{g_b}}$ is non-zero for all $b \in B$. Indeed since $c(\mathfrak{s}_b)^2 = c(\mathfrak{s})^2 \ge 0$, we have that $( [c(\mathfrak{s})_b]^{+_{g_b}}  )^2 \ge  ( [c(\mathfrak{s}_b)]^{-_{g_b}})^2 \ge 0$ with equality only if $[c(\mathfrak{s}_b)]^{+_{g_b}} = [c(\mathfrak{s}_b)]^{-_{g_b}} = 0$, which can only happen if $c(\mathfrak{s}_b)$ is torsion. Thus the zero perturbation $\eta = 0$ defines a chamber which we call the {\em zero chamber}. Since the only choice required in this definition was the choice of metric, it is clear that this is a well-defined chamber. We use the superscript $0$ to denote families Seiberg--Witten invariants for the zero chamber.

\begin{proposition}
The constant chamber and zero chamber agree when both are defined.
\end{proposition}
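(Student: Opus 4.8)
The plan is to show that the two chambers are connected by a path in the space of pairs $(g,\eta)$ avoiding the wall. Since both chambers are defined, we are in the situation where the monodromy on $H^2(X_p;\mathbb{R})$ is trivial (so the constant chamber makes sense) and $c(\mathfrak{s})^2 \ge 0$ with $c(\mathfrak{s})$ non-torsion (so the zero chamber makes sense). Fix a metric $g$; since the constant chamber is independent of the choice of $g$ and the zero chamber also only depends on $g$ through a connected choice, it suffices to work with this single fixed $g$ and connect the section $\eta_0 = 0$ (representing the zero chamber) to the section $\eta_v = \psi^{-1}(v)$ for $v \in H$ large (representing the constant chamber), through sections of $H^+_g(E/B)$ that are everywhere disjoint from the wall section $w_{\widehat{\mathfrak{s}}}$.

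First I would record what the wall section is under the trivialisation $\psi : H^+_g(E/B) \xrightarrow{\ \cong\ } \underline{H}$: composing with parallel translation, $w_{\widehat{\mathfrak{s}}}$ becomes the constant section $b \mapsto \pi\,\rho_H\big(c(\mathfrak{s})\big) \in H$, because the local system is trivial and $c(\widehat{\mathfrak{s}})$ restricts to $c(\mathfrak{s})$ on every fibre — more precisely, $c(\widehat{\mathfrak{s}}) \in H^2(E/B)$ is a flat section (a global class pulled back from the total space) whose value in each fibre is $c(\mathfrak{s})$, so under the trivialisation its harmonic projection is genuinely constant. Call this constant value $w_0 := \pi\,\rho_H(c(\mathfrak{s})) \in H$. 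So in the trivialised picture we must connect $0$ to a large $v$ within $H \setminus \{w_0\}$, viewing these as constant sections.

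Next, the key point: $H$ has dimension $b_+(X) = 2 \ge 2$, so $H \setminus \{w_0\}$ is path-connected. Hence there is a path $v_t$ in $H$ from $0$ to $v$ avoiding $w_0$. This path of constant sections, pulled back via $\psi^{-1}$ to a path of sections $\eta_t = \psi^{-1}(v_t)$ of $H^+_g(E/B)$, stays disjoint from $w_{\widehat{\mathfrak{s}}}$ for all $t$ and all $b$: indeed $\psi(\eta_t)(b) = v_t \ne w_0 = \psi(w_{\widehat{\mathfrak{s}}})(b)$ and $\psi$ is a fibrewise isomorphism. Finally I would lift this to an actual path in the space of pairs $(g,\eta)$ by holding the metric fixed at $g$ and realising each section $\eta_t$ by an honest family of self-dual $2$-forms (e.g. take $\eta_t$ to be the harmonic representatives themselves, so that $[\eta_t{}_b]_{g_b} = \psi^{-1}(v_t)(b)$ on the nose). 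This exhibits $(g,0)$ and $(g,\eta_v)$ in the same connected component, i.e.\ the same chamber, which is what we wanted.

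The main obstacle I anticipate is the bookkeeping in the first step: one must be careful that $c(\widehat{\mathfrak{s}})$ really does give a flat (monodromy-invariant) section of $H^2(E/B)$ and that its harmonic-projection section agrees with $w_{\widehat{\mathfrak{s}}}$ as defined, so that the wall becomes a genuine constant in the trivialisation; this uses that the families spin$^c$-structure is induced from a spin$^c$-structure on the vertical tangent bundle globally, hence $c(\widehat{\mathfrak{s}})$ is a global class. A secondary subtlety is to confirm that the endpoint $\eta_v$ with $v$ large is indeed disjoint from the wall (so that it lies in the constant chamber as defined) — but this is exactly the defining property of the constant chamber and uses only that $w_0$ is a fixed point of $H$ while $v$ is taken large. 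Everything else is routine, and the argument in fact works verbatim whenever $b_+(X) \ge 2$ with the two chambers defined.
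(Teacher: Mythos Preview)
There is a genuine gap in your first step. You claim that under the trivialisation $\psi$, the wall section $w_{\widehat{\mathfrak{s}}}$ becomes the constant section $b \mapsto \pi\rho_H(c(\mathfrak{s}))$. This is false. While it is true that $c(\mathfrak{s})$ is a constant section of $H^2(E/B) \cong H^2(X_p;\mathbb{R})$, the wall is $w_{\widehat{\mathfrak{s}}}(b) = \pi[c(\mathfrak{s})]^{+_{g_b}}$, the projection of this constant class onto the subspace $P_b = H^+_{g_b} \subset H^2(X_p;\mathbb{R})$. The subspaces $P_b$ genuinely vary with $b$: the family $E \to B$ is not assumed to be a trivial bundle, only its monodromy on $H^2$ is trivial, so there is no canonical ``constant'' family of metrics, and the projection of a fixed vector onto a moving subspace is not constant. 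Composing with $\rho_H$ does not repair this, since $\rho_H \circ \rho_{P_b} \ne \rho_H$ unless $P_b = H$. For a concrete check in signature $(1,1)$: with $H = \langle e_1 \rangle$ and $P_t = \langle e_1 + t e_2 \rangle$ one finds $\rho_H(\rho_{P_t}(e_1)) = (1-t^2)^{-1} e_1$, which depends on $t$.

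This is precisely the point the paper's proof is built around. There one writes the (non-constant) wall $\omega : B \to H$ as a composite $\omega = W \circ p$, where $p : B \to \mathcal{G}$ sends $b$ to the subspace $P_b$ and $\mathcal{G}$ is the contractible space of maximal positive definite subspaces of $H^2(X_p;\mathbb{R})$. Contractibility of $\mathcal{G}$ forces $\omega$ to be homotopic to a constant map into $H \setminus \{0\}$, and from there the comparison of chambers proceeds much as you outline (the paper compares $-\omega$ and $v - \omega$ as maps $B \to H\setminus\{0\}$). In short, your argument would go through once ``the wall is constant'' is replaced by ``the wall is homotopic to a constant'', but establishing that homotopy is the actual content of the proof, and it is missing from your proposal. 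You even flagged this bookkeeping step as the main obstacle; the resolution you gave for it is the part that fails.
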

\begin{proof}
By assumption the zero and constant chambers are both defined. So $b_+(X) > 1$, the monodromy action on $H^2(X_p ; \mathbb{R})$ is trivial, $c(\mathfrak{s})^2 \ge 0$ and $c(\mathfrak{s})$ is not torsion. Since the monodromy is trivial we will identify $H^2(E/B)$ with the constant bundle $H^2(X_p ; \mathbb{R})$. Fix a maximal positive definite subspace $H \subseteq H^2(X_p ; \mathbb{R})$ and let $\rho_H : H^2(X_p ; \mathbb{R}) \to H$ be the projection. Choose a family $\{ g_b \}$ of metrics. This defines the bundle $H^+_g(E/B)$ which under the isomorphism $H^2(E/B) \cong H^2(X_p ; \mathbb{R})$ can be regarded as a family $\{ P_b \}$ of maximal positive definite subspaces of $H^2(X_p ; \mathbb{R})$. Recall the isomorphism $\psi : H^+_g(E/B) \to H$ is the composition of inclusion $H^+_g(E/B) \to H^2(E/B) = H^2(X_p ; \mathbb{R})$ with $\rho_H$. Under $\psi$, the constant chamber corresponds to a constant section $b \mapsto v \in H$, where $v$ is sufficiently large. Let $\omega : B \to H$ be the image of the wall under $\psi$, that is, $\omega(b) = \psi( w_{\widehat{\mathfrak{s}}}(b) ) = \psi( \pi c(\mathfrak{s})^{+_{g_b}})$. 

Under the isomorphism $\psi$, a chamber corresponds to a homotopy class of map $\nu : B \to H$ where $\nu(b) \neq \omega(b)$ for all $b \in B$. Equivalently, $\nu - \omega$ defines a map $B \to H \setminus \{0\}$ and two such maps $\nu_1,\nu_2$ define the same chamber if and only if $\nu_1 - \omega$ and $\nu_2 - \omega$ are homotopic as maps $B \to H \setminus \{0\}$. For the constant chamber we take a map $\nu_c : B \to H$ given by $\nu_c(b) = v$ for all $b \in B$, where $v \in H$ is sufficiently large. For the zero chamber we take $\nu_0 : B \to H$ given by $\nu_0(b) = 0$. Let $\nu_c'(b) = v - \omega(b)$ and $\nu_0'(b) = - \omega(b)$. Then $\nu'_c,\nu'_0$ are maps $B \to H \setminus \{0\}$. To show that the constant and zero chambers coincide, it suffies to show that $\nu'_c$ and $\nu'_0$ are homotopic. Since $v$ is sufficiently large, we can assume that $v \neq t \omega(b)$ for all $t \in [0,1]$ and all $b \in B$. Then $v- t\omega(b)$ defines a homotopy from $\nu'_c$ to the constant map $v$. So it remains to show that $\nu'_0$ is homotopic to a constant map. 

Let $\mathcal{G}$ be the set of maximal positive definite subspaces of $H^2(X_p ; \mathbb{R})$. If $P \in \mathcal{G}$, then we let $\rho_P : H^2(X_p ; \mathbb{R}) \to P$ denote the projection to $P$. Define a map $W : \mathcal{G} \to H \setminus \{0\}$ by setting $W(P) = \rho_H \circ \rho_P( \pi c(\mathfrak{s}) )$. Observe that $W(P) \neq 0$ because $c(\mathfrak{s})^2 \ge 0$ and $c(\mathfrak{s})$ is not torsion. Recall that the family of metrics $\{ g_b \}$ defines a family $\{ P_b \}$ of maximal positive definite subspaces of $H^2(X_p ; \mathbb{R})$. In other words, the family $\{ g_b \}$ determines a map $p : B \to \mathcal{G}$ such that $p(b) = P_b$. From the definitions of $W,p$ and $\omega$, it follows that $\omega = W \circ p$. So $\omega : B \to H \setminus \{0\}$ factors through $\mathcal{G}$.  But $\mathcal{G}$ is the space of maximal positive definite subspaces of $H^2(X_p ; \mathbb{R})$, which is a contractible space. Hence $\omega$ is homotopic to a constant map.
\end{proof}

Let us consider the zero and constant chambers in the case that $E = E(f)$ is the mapping cylinder of a diffeomorphism $f : X \to X$. We assume that $X$ is a compact, oriented, simply-connected smooth $4$-manifold with  $b_+(X) = 2$ and we assume that the spin$^c$-structure $\mathfrak{s}$ satisfies $d(\mathfrak{s}) = -1$. First consider the constant chamber. This is defined provided the monodromy representation of the family $E(f)$ on $H^2(X ; \mathbb{Z})$ is trivial. Equivalently, $f$ acts trivially on $H^2(X ; \mathbb{Z})$. Let $TDiff(X) \subseteq Diff(X)$ be the subgroup of $Diff(X)$ acting trivially on $H^2(X ; \mathbb{Z})$. Observe that the Torelli group $T(X)$ is the group of connected components of $TDiff(X)$. If $f \in TDiff(X)$, then $f$ fixes every spin$^c$-structure and so for every $f \in TDiff(X)$ and every $\mathfrak{s} \in \mathcal{S}(X)$, we have a mod $2$ invariant
\[
SW_{X , \mathfrak{s}}^c(f) \in \mathbb{Z}_2.
\]
Furthermore, since $f \in TDiff(X)$ we have $sgn_+(f) = 1$, so the integer invariants $SW_{X , \mathfrak{s} , \mathbb{Z}}^c(f) \in \mathbb{Z}$ are also defined. We can regard $SW_{X , \mathfrak{s}}^c(f)$ as a map from $TDiff(X)$ to $\mathbb{Z}_2$. In fact since the diffeomorphism class of $E(f)$ depends only on the isotopy class of $f$, it follows that this descends to a map from $T(X)$ to $\mathbb{Z}_2$:
\[
SW_{X  , \mathfrak{s}}^c : T(X) \to \mathbb{Z}_2.
\]
Similarly, the integer families Seiberg--Witten invariant defines a map
\[
SW_{X , \mathfrak{s} , \mathbb{Z}}^c : T(X) \to \mathbb{Z}.
\]
In fact, $SW_{X , \mathfrak{s}}^c$ and $SW_{X , \mathfrak{s},\mathbb{Z}}^c$ are group homomorphisms (this is essentially \cite[Lemma 2.6]{rub1}, adapted to the case $b_+(X)=2$).

Next consider the zero chamber. Recall that this is defined provided $c(\mathfrak{s})^2 > 0$, or $c(\mathfrak{s})^2 = 0$ and $c(\mathfrak{s})$ is non-torsion. In our case, since $b_+(X) = 2$ and $d(\mathfrak{s}) = -1$, it follows that $c(\mathfrak{s})^2 = 10 - b_-(X)$. Thus if $b_-(X) < 10$ then the zero chamber is defined. If $b_-(X) = 10$, then $c(\mathfrak{s})^2 = 0$ and $\sigma(X) = -8$. If $c(\mathfrak{s})$ is torsion, then it is zero (because $X$ is assumed to be simply-connected) and thus $X$ is spin, but this is impossible since $\sigma(X) = -8$ contradicts Rochlin's theorem. So under our assumptions on $X$, $c(\mathfrak{s})$ can not be torsion. Thus the zero chamber is defined whenever $b_-(X) \le 10$. In this case, we get a mod $2$ invariant
\[
SW_{X , \mathfrak{s}}^0(f) \in \mathbb{Z}_2.
\]
If in addition, $sgn_+(f) = 1$, then we get an integer invariant
\[
SW_{X , \mathfrak{s},\mathbb{Z}}^0(f) \in \mathbb{Z}.
\]
\subsection{Connected sum and blowup formulas for $1$-parameter families}

Let $X$ be a compact, oriented, simply-connected smooth $4$-manifold with $b_+(X) = 2$, let $\mathfrak{s}$ be a spin$^c$-structure with $d(\mathfrak{s}) = -1$ (equivalently, $c(\mathfrak{s})^2 = 10 - b_-(X)$) and let $f$ be a diffeomorphism of $f$ that preserves $\mathfrak{s}$. We consider two constructions of such triples $(X , \mathfrak{s} , f)$ as connected sums. We then use the general connected sum formula of \cite{tom2} to compute the families Seiberg--Witten invariants in these cases.

\noindent {\bf Connected sums with $S^2 \times S^2$.} Suppose that $X = X' \# (S^2 \times S^2)$, where $X'$ is a compact, oriented, simply-connected smooth $4$-manifold with $b_+(X') = 1$. We assume that $\mathfrak{s}$ has the form $\mathfrak{s} = \mathfrak{s}' \# \mathfrak{s}_0$, where $\mathfrak{s}'$ is a spin$^c$-structure on $X'$ with $d(\mathfrak{s}') = 0$ (equivalently, $c(\mathfrak{s}')^2 = 9 - b_-(X')$) and $\mathfrak{s}_0$ is the unique spin$^c$-structure on $S^2 \times S^2$ with $c(\mathfrak{s}_0) = 0$. Let $f',g$ be orientation preserving diffeomorphisms of $X'$ and $S^2 \times S^2$ respectively. Assume that $f',g$ each act as the identity in a neighbourhood of a point so that we can form the connected sum diffeomorphism $f = f' \# g$ on $X$. Assume further that $f'$ preserves $\mathfrak{s}'$. Then $f$ preserves $\mathfrak{s}$. We have the following gluing formula:

\begin{proposition}
Let $(X , \mathfrak{s} , f)$ be as above. Suppose that $c(\mathfrak{s})^2 \ge 0$ and that $sgn_+(g) = -1$. Then
\[
SW_{X, \mathfrak{s}}^0( f) = SW^0(X' , \mathfrak{s}') \; ({\rm mod} \; 2)
\]
where $SW^0(X',\mathfrak{s}')$ denote the ordinary Seiberg--Witten invariant of $(X' , \mathfrak{s}')$ with respect to the zero chamber.
\end{proposition}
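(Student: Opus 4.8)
The plan is to apply the general connected sum gluing formula of \cite{tom2} to the decomposition $E(f) = E(f') \# E(g)$, where $E(f') \to S^1$ is the mapping cylinder of $f'$ on $X'$ and $E(g) \to S^1$ is the mapping cylinder of $g$ on $S^2 \times S^2$. The connected sum of families is formed fibrewise, using that $f'$ and $g$ each act as the identity near a point, so that the fibrewise connected sum is well-defined and $E(f)$ is diffeomorphic as a family to $E(f') \# E(g)$. The corresponding fibrewise spin$^c$-structures glue: $\widehat{\mathfrak{s}}$ on $E(f)$ restricts to $\widehat{\mathfrak{s}'}$ on $E(f')$ and to the fibrewise extension of $\mathfrak{s}_0$ on $E(g)$. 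Since $c(\mathfrak{s}_0) = 0$ on $S^2 \times S^2$, the spin$^c$-structure on the $S^2 \times S^2$ side is (fibrewise) the one with trivial determinant.

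First I would identify the relevant invariants on each side. On the $X'$ side, $d(\mathfrak{s}') = 0$, so the family $E(f')$ over $S^1$ has a moduli space of expected dimension $d(\mathfrak{s}') + \dim(S^1) = 1$; but what enters the gluing formula is the ordinary (unparametrised) Seiberg--Witten invariant $SW^0(X', \mathfrak{s}')$ of the fibre $(X', \mathfrak{s}')$ with respect to the zero chamber, which is defined because $b_+(X') = 1$ and (as established in the discussion of zero chambers) $c(\mathfrak{s}')$ is non-torsion with $c(\mathfrak{s}')^2 = 9 - b_-(X') = c(\mathfrak{s})^2 - 1 \geq -1$; one checks $c(\mathfrak{s}')^2 \geq 0$ follows from $c(\mathfrak{s})^2 \geq 0$ together with the fact that the connected sum class on the $S^2 \times S^2$ factor is isotropic, so $c(\mathfrak{s})^2 = c(\mathfrak{s}')^2$, hence actually $c(\mathfrak{s}')^2 = c(\mathfrak{s})^2 \ge 0$ and the zero chamber for $(X',\mathfrak{s}')$ is unambiguous. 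On the $S^2 \times S^2$ side, the relevant input is the families invariant of $E(g)$ for the spin$^c$-structure with $c = 0$; here the hypothesis $sgn_+(g) = -1$ is exactly the condition ensuring that $g$ reverses the two components of the positive cone of $H^2(S^2 \times S^2; \mathbb{R})$, which is what forces the families contribution of $E(g)$ to be the "odd" (nonzero mod $2$) one. This is the computation that the wall-crossing around the $S^1$ for $E(g)$ is transverse and contributes $1 \pmod 2$, analogous to the key local model in \cite{rub1,bk1}.

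Then I would assemble these via the gluing formula of \cite{tom2}: the families SW invariant of the connected sum family is the product of the fibrewise invariant of one summand with the families invariant of the other, once the chambers and dimension count match up. Here the total expected dimension is $d(\mathfrak{s}) + 1 = 0$, and this splits as $d(\mathfrak{s}') = 0$ (contributing the ordinary invariant of $X'$) plus the $1$-dimensional families moduli on the $S^2 \times S^2$ side (contributing the wall-crossing count). One must check that the zero chamber on $E(f)$ restricts to the zero chamber on the $(X', \mathfrak{s}')$ side and to the relevant chamber on $E(g)$ — this is where $c(\mathfrak{s})^2 \geq 0$ and non-torsionality of $c(\mathfrak{s})$ are used, via the discussion preceding the proposition, to ensure the zero perturbation lies off the wall everywhere in the family, so that the product chamber on the connected sum is precisely the zero chamber. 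The result is $SW^0_{X,\mathfrak{s}}(f) \equiv SW^0(X', \mathfrak{s}') \cdot (\text{wall-crossing count for } E(g)) \equiv SW^0(X',\mathfrak{s}') \pmod 2$.

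The main obstacle is bookkeeping the chamber identifications under the gluing map: one must verify that gluing the zero chamber on the $X$-side corresponds to the product of the zero chamber of $(X',\mathfrak{s}')$ with the correct chamber of $E(g)$, and that the hypothesis $sgn_+(g) = -1$ selects the component of $P_{\mathfrak{s}_0}(g)$ that makes the $S^2 \times S^2$ family contribute nontrivially. The other point requiring care is confirming that the hypotheses of the \cite{tom2} gluing theorem are met — in particular that the neck-stretching argument applies uniformly over the $S^1$ base and that no reducibles appear along the stretched necks — but this should follow from the fact that $c(\mathfrak{s})^2 \geq 0$ and $c(\mathfrak{s})$ non-torsion keep us in the zero chamber throughout, together with $b_+(X') = 1 > 0$ so that the $X'$-side moduli space is generically regular and reducible-free in its zero chamber.
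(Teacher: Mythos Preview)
Your overall strategy---apply the gluing formula of \cite{tom2} to the fibrewise connected sum $E(f) = E(f') \# E(g)$---is the same as the paper's. However, your description of the $S^2 \times S^2$ contribution is not quite right, and this matters because the formula in \cite{tom2} does not have the ``product of two SW invariants'' shape you describe.

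The issue is this. You say the relevant input on the $S^2 \times S^2$ side is ``the families invariant of $E(g)$ for the spin$^c$-structure with $c=0$,'' computed as a ``wall-crossing count around the $S^1$.'' But for $\mathfrak{s}_0$ with $c(\mathfrak{s}_0)=0$, the zero perturbation sits \emph{on} the wall, so there is no zero-chamber families invariant of $E(g)$ to speak of. What the gluing formula of \cite{tom2} actually produces is
\[
\widetilde{SW}^{\phi,\mu}_{\mathbb{Z}_2}(1) \;=\; \omega(H^+_2)\,\cdot\,\widetilde{SW}^{\phi',\mu_1}_{\mathbb{Z}_2}\bigl(\omega_{S^1}(-D_2)\bigr),
\]
where $H^+_2 = H^+(E(g)/S^1)$ and $D_2$ is the families index on the $S^2\times S^2$ side. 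The hypothesis $sgn_+(g)=-1$ enters by making $H^+_2$ a non-orientable real line bundle over $S^1$, so that $\omega(H^+_2)=[S^1]$; and $D_2$ has virtual rank zero (since $c(\mathfrak{s}_0)^2=\sigma(S^2\times S^2)=0$), forcing $\omega_{S^1}(-D_2)=1$. The chamber identification works because the zero chamber on $X$ corresponds to $\phi=(-w_{\mathfrak{s}'},-w_{\mathfrak{s}_0})=(-w_{\mathfrak{s}'},0)$, which is exactly the form $(\phi',0)$ required by the gluing theorem. Restricting $\widetilde{SW}^{\phi',\mu_1}_{\mathbb{Z}_2}(1)$ to a fibre then gives the ordinary $SW^0(X',\mathfrak{s}')$.

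So the mechanism is not a wall-crossing count on $E(g)$ but rather the Stiefel--Whitney class of $H^+_2$ and the triviality of the index contribution. Your heuristic lands on the correct answer, but if you were to write this out carefully you would need to replace the ``wall-crossing for $E(g)$'' step with the characteristic-class computation above.
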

\begin{proof}

Set $H^+ = H^+(E(f)/S^1)$, $H^+_1 = H^+(E(f')/S^1)$ and $H^+_2 = H^+(E(g)/S^1)$. Since $X = X' \# (S^2 \times S^2)$ and $f = f' \# g$, it follows that $H^+ \cong H_1^+ \oplus H_2^+$. The spin$^c$-structure $\mathfrak{s}$ is monodromy invariant and so extends to a family of spin$^c$-structures on the fibres of the vertical tangent bundle $T(E(f)/S^1)$. Let $\{A_\theta\}_{\theta \in S^1}$ be a family of spin$^c$ connections on $T(E(f)/S^1)$ and let $D = \ind D_A \in K^0(S^1)$ be the families index of $\{A_\theta\}$. Similarly, define $D_1$ and $D_2$ for $E(f')$ and $E(g)$ respectively. Then $D \cong D_1 \oplus D_2$.

In \cite{bk2} it is shown that the Seiberg--Witten invariant can be reformulated in equivariant cohomology through the Bauer-Furuta invariant in the following way. Let $\mu : V' \oplus U' \to V \oplus U$ be a finite dimensional approximation of the Seiberg--Witten monopole map for $E(f)$ where $V, V' \to S^1$ are complex vector spaces such that $V' - V = D$ and $U, U' \to S^1$ are real vector bundles such that $U = U' \oplus H^+$. We let $a$ and $a'$ denote the complex dimension of $V$ and $V'$ respectively so that $a' - a = \ind D_A$. Similarly, $b$ and $b'$ are the real dimensions of $U$ and $U'$ with $b - b' = b_+(X)$. The circle $S^1$ acts on $V, V'$ by multiplication and on $U, U'$ trivially. In the same fashion, $\mu_1 : V_1' \oplus U_1' \to V_1 \oplus U_1$ and $\mu_2 : V_2' \oplus U_2' \to V_2 \oplus U_2$ denote finite dimensional approximations for the monopole maps of $E(f')$ and $E(g)$ respectively, with $V_i' - V_i = D_i$ and $U_i = U_i' \oplus H_i^+$.

In the setting of \cite{bk2}, a chamber $\phi : S^1 \to H^+$ is a homotopy class of non-vanishing section of $H^+$. The zero chamber corresponds to the section $\phi = (-w_{\mathfrak{s}'}, -w_{\mathfrak{s}_0})$, where $w_{\mathfrak{s}'} : S^1 \to H^+_1$ is the wall for $(X', \mathfrak{s}')$ and $w_{\mathfrak{s}_0} : S^1 \to H^+_2$ is the wall for $(S^2 \times S^2, \mathfrak{s}_0)$. Note that $\phi$ is non-vanishing by our assumptions on $X$ and $\mathfrak{s}$. The assumptions $d(\mathfrak{s}) = -1$ and $d(\mathfrak{s}') = 0$ enforce $c(\mathfrak{s}')^2 \geq 0$ and hence $\phi' = -w_{\mathfrak{s}'}$ also defines a chamber for $X'$. 

Let $\pi_{\mathbb{P}(V')} : \mathbb{P}(V') \to S^1$ denote the complex projective bundle of $V'$ which has dimension $2a' - 2$. The cohomological formulation of the Seiberg--Witten invariant is the map
\begin{align*}
    \widetilde{SW}^{\phi, \mu}_{\mathbb{Z}_2} : H^{*}(\mathbb{P}(V') ; \mathbb{Z}_2) &\to H^{* - d(\mathfrak{s})}(S^1 ; \mathbb{Z}_2)\\
    \widetilde{SW}^{\phi, \mu}_{\mathbb{Z}_2}(\alpha) = (&\pi_{\mathbb{P}(V')})_* (\alpha \eta^\phi).
\end{align*}
Here $\eta^\phi \in H^{2a + b_+(X) - 1}(\mathbb{P}(V') ; \mathbb{Z}_2)$ is determined by the refined Thom class $\mu^* \phi_*(1) \in H^{2a + b}(S_{V',U'}, S_{U'})$ (see \cite[Section 3]{tom2}). If $d(\mathfrak{s}) = -1$, then the usual moduli space definition of the mod 2 families Seiberg--Witten invariant (that is, the invariant $SW^0_{X,\mathfrak{s}}(f)$) is related to the cohomological families Seiberg--Witten invariants by:
\[
SW^0_{X,\mathfrak{s}}(f) [S^1] = \widetilde{SW}^{\phi, \mu}_{\mathbb{Z}_2}(1),
\]
where $[S^1]$ is the generator of $H^1(S^1 ; \mathbb{Z}_2) \cong \mathbb{Z}_2$. The general gluing formula \cite[Theorem 1.2]{tom2} states that
\[
    \widetilde{SW}^{\phi, \mu}_{\mathbb{Z}_2}(1) = \omega(H^+_2) \widetilde{SW}_{\mathbb{Z}_2}^{\phi', \mu_1}(\omega_{S^1}(-D_2)).
\]
In this formula, it is assumed that the chamber $\phi$ is of the form $(\phi', 0)$, which holds here since $c(\mathfrak{s}_0) = 0$. Here $\omega(H^+_2)$ is the ordinary Steifel-Whitney class of $H^+_2$ and $\omega_{S^1}(-D_2) \in H_{S^1}(S^1; \mathbb{Z}_2)$ is the mod $2$ reduction of the $S^1$-equivariant Euler class of $-D_2$. Since $c(\mathfrak{s}_0) = 0$ and $\sigma(S^2 \times S^2) = 0$, the dimension of $D_2$ is zero. It follows from \cite[Theorem 3.1]{bar3} that $\omega_{S^1}(-D_2) = 1$ and
\[
    \widetilde{SW}^{\phi, \mu}_{\mathbb{Z}_2}(1) =  \omega(H^+_2) \widetilde{SW}_{\mathbb{Z}_2}^{\phi', \mu_1}(1).
\]
Since $sgn_+(g) = -1$ and $b_+(S^2 \times S^2) = 1$, $H_2^+$ is a non-orientable real line bundle. Therefore $\omega(H^+_2) = [S^1]$ and
\[
SW^0_{X,\mathfrak{s}}(f) [S^1] = \widetilde{SW}^{\phi, \mu}_{\mathbb{Z}_2}(1) = \widetilde{SW}_{\mathbb{Z}_2}^{\phi', \mu_1}(1) [S^1],
\]
from which it follows that $SW^0_{X,\mathfrak{s}}(f) = \widetilde{SW}_{\mathbb{Z}_2}^{\phi', \mu_1}(1)$. Recall that $d(\mathfrak{s}') = 0$ so $\widetilde{SW}_{\mathbb{Z}_2}^{\phi', \mu_1}(1) \in H^0(S^1 ; \mathbb{Z}_2)$. Pulling back to a point, $\widetilde{SW}_{\mathbb{Z}_2}^{\phi', \mu_1}(1)$ agrees with the unparametrised Seiberg--Witten invariant $SW^0(X', \mathfrak{s}')$, hence we have shown that $SW^0_{X,\mathfrak{s}}(f) = SW^0(X', \mathfrak{s}') \; ({\rm mod} \; 2)$.
\end{proof}

\noindent {\bf Connected sums with $\overline{\mathbb{CP}^2}$.} Suppose that $X = X' \# \overline{\mathbb{CP}^2}$, where $X'$ is a compact, oriented, simply-connected smooth $4$-manifold with $b_+(X) = 2$. We assume that $\mathfrak{s}$ has the form $\mathfrak{s} = \mathfrak{s}' \# \kappa$, where $\mathfrak{s}'$ is a spin$^c$-structure on $X'$ with $d(\mathfrak{s}') = -1$ and $\kappa$ is a spin$^c$-structure on $\overline{\mathbb{CP}^2}$ such that $c(\kappa)^2 = -1$. Let $f',g$ be orientation preserving diffeomorphisms of $X'$ and $\overline{\mathbb{CP}^2}$ respectively. Assume that $f',g$ each acts as the identity in a neighbourhood of a point so that the connected sum diffeomorphism $f = f' \# g$ is defined. Assume that $f'$ acts trivially on $H^2(X' ; \mathbb{Z})$ and that $g$ acts trivially on $H^2(\overline{\mathbb{CP}^2} ; \mathbb{Z})$. Then $f$ acts trivially on $H^2(X ; \mathbb{Z})$ and we have the following blowup formula:

\begin{proposition}
Let $(X , \mathfrak{s} , f)$ be as above. Then
\[
SW^c_{X  , \mathfrak{s} , \mathbb{Z}}(f) = SW_{X' , \mathfrak{s}' , \mathbb{Z}}^c(f').
\]
\end{proposition}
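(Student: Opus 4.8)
The plan is to mirror the proof of the previous proposition, with $\overline{\mathbb{CP}^2}$ playing the role of $S^2 \times S^2$ and the constant chamber playing the role of the zero chamber. First I would set up the splitting: write $H^+ = H^+(E(f)/S^1)$, $H^+_1 = H^+(E(f')/S^1)$ and $H^+_2 = H^+(E(g)/S^1)$. Since $b_+(\overline{\mathbb{CP}^2}) = 0$ we have $H^+_2 = 0$ and hence $H^+ \cong H^+_1$. Because $f$ and $f'$ act trivially on $H^2$, the constant chamber is defined for both $(X,\mathfrak{s},f)$ and $(X',\mathfrak{s}',f')$, and the first point to check is that these constant chambers correspond under $H^+ \cong H^+_1$. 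Using the canonical (parallel transport) trivialisations of $H^2(E(f)/S^1)$ and $H^2(E(f')/S^1)$ together with the orthogonal splitting $H^2(X;\mathbb{R}) = H^2(X';\mathbb{R}) \oplus H^2(\overline{\mathbb{CP}^2};\mathbb{R})$, in which the second summand is negative definite, one sees that $c(\kappa)$ lies in the negative definite summand, so the wall $w_{\widehat{\mathfrak{s}}}$ is carried to $w_{\widehat{\mathfrak{s}'}}$ under $H^+ \cong H^+_1$; a constant section in a sufficiently large direction of a maximal positive definite subspace $H \subseteq H^2(X';\mathbb{R}) \subseteq H^2(X;\mathbb{R})$ then represents both constant chambers simultaneously.

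Next I would apply the general gluing formula \cite[Theorem 1.2]{tom2} exactly as in the previous proof. Let $D_2 = \mathrm{ind}\, D_A \in K^0(S^1)$ be the families index of the Dirac operator for $E(g)$ and the family spin$^c$-structure induced by $\kappa$. Since $c(\kappa)^2 = -1$ and $\sigma(\overline{\mathbb{CP}^2}) = -1$, the complex virtual rank of $D_2$ is $(c(\kappa)^2 - \sigma(\overline{\mathbb{CP}^2}))/8 = 0$; as $K^0(S^1) \cong \mathbb{Z}$ is detected by rank, $D_2 = 0$ in $K^0(S^1)$. The chamber $\phi$ has trivial $\overline{\mathbb{CP}^2}$-component (the only section of the zero bundle $H^+_2$), so the gluing formula applies and gives $\widetilde{SW}^{\phi,\mu}_{\mathbb{Z}_2}(1) = \omega(H^+_2)\, \widetilde{SW}^{\phi',\mu_1}_{\mathbb{Z}_2}(\omega_{S^1}(-D_2)) = \widetilde{SW}^{\phi',\mu_1}_{\mathbb{Z}_2}(1)$, using $\omega(H^+_2) = 1$ since $H^+_2 = 0$ and $\omega_{S^1}(-D_2) = 1$ by \cite[Theorem 3.1]{bar3} since $-D_2 = 0$. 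Identifying $\widetilde{SW}^{\phi',\mu_1}_{\mathbb{Z}_2}(1)$ with $SW^c_{X',\mathfrak{s}'}(f')$ by pulling back to a point, as in the previous proof, yields the mod $2$ statement $SW^c_{X,\mathfrak{s}}(f) = SW^c_{X',\mathfrak{s}'}(f') \ (\mathrm{mod}\ 2)$.

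For the integer statement I would run the same argument with the integer, $S^1$-equivariant refinement of the monopole and gluing machinery. Since $f = f' \# g$ acts trivially on $H^2(X;\mathbb{Z})$ we have $sgn_+(f) = sgn_+(f') = 1$, so the families moduli spaces are coherently oriented and $SW^c_{X,\mathfrak{s},\mathbb{Z}}(f)$, $SW^c_{X',\mathfrak{s}',\mathbb{Z}}(f')$ are defined; the $\overline{\mathbb{CP}^2}$-factor then contributes the $S^1$-equivariant Euler class $e_{S^1}(-D_2)$ of $-D_2 = 0$, which is $1$, while $H^+_2 = 0$ contributes a trivial Euler class, giving $SW^c_{X,\mathfrak{s},\mathbb{Z}}(f) = SW^c_{X',\mathfrak{s}',\mathbb{Z}}(f')$. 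I expect the main obstacles to be twofold: (i) checking carefully that the constant chamber is respected by the connected-sum decomposition, which requires tracking the canonical trivialisations of the $H^+$-bundles and using that the blowup contributes only negative definite cohomology; and (ii) pinning down the signs in the integer gluing formula, i.e. that the $\overline{\mathbb{CP}^2}$-factor contributes $+1$ rather than $-1$, which is the familiar orientation subtlety in the Fintushel--Stern blowup formula and would be handled via the integer analogues of the inputs from \cite{tom2} and \cite{bar3} used above for the mod $2$ count.
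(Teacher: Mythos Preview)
Your approach is essentially the same as the paper's, but there are two points worth flagging.

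First, the detour through the mod $2$ invariant is unnecessary. The paper goes directly to the integer statement: since $f$ acts trivially on $H^2(X;\mathbb{Z})$, the integer cohomological invariant $\widetilde{SW}^{\phi,\mu}_{\mathbb{Z}}$ is defined, and the gluing formula \cite{tom2} gives
\[
\widetilde{SW}^{\phi,\mu}_{\mathbb{Z}}(1) = \widetilde{SW}^{\phi',\mu_1}_{\mathbb{Z}}\bigl( e(H^+_2)\, e_{S^1}(-D_2) \bigr) = \widetilde{SW}^{\phi',\mu_1}_{\mathbb{Z}}(1),
\]
with $e(H^+_2)=1$ and $e_{S^1}(-D_2)=1$ for the reasons you give. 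Your observation that $D_2 = 0$ in $K^0(S^1)\cong\mathbb{Z}$ because rank detects everything is a clean way to see this.

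Second, and more substantively, your final identification ``by pulling back to a point, as in the previous proof'' is wrong here. In the $S^2\times S^2$ proposition one had $d(\mathfrak{s}')=0$, so $\widetilde{SW}^{\phi',\mu_1}(1)$ landed in $H^0(S^1)$ and restriction to a point recovered the unparametrised invariant of $X'$. In the present blowup situation $d(\mathfrak{s}') = -1$, so $\widetilde{SW}^{\phi',\mu_1}_{\mathbb{Z}}(1) \in H^1(S^1;\mathbb{Z})$, and the correct identification is $SW^c_{X',\mathfrak{s}',\mathbb{Z}}(f')\,[S^1] = \widetilde{SW}^{\phi',\mu_1}_{\mathbb{Z}}(1)$, exactly parallel to the identification on the $X$ side. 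No pullback to a point is involved.

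Finally, your chamber argument is morally right but slightly loose: the walls $w_{\widehat{\mathfrak{s}}}$ and $w_{\widehat{\mathfrak{s}'}}$ are computed with respect to different families of metrics, so they need not literally agree under $H^+\cong H^+_1$. The paper handles this by noting that $\phi = c - w_{\mathfrak{s}}$ and $\phi' = c - w_{\mathfrak{s}'}$ are homotopic through $\phi_t = c - t w_{\mathfrak{s}'} - (1-t)w_{\mathfrak{s}}$, which is non-vanishing for $c$ sufficiently large. Your argument that $c(\kappa)$ lies in the negative-definite summand is the right intuition for why this homotopy causes no trouble.
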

\begin{proof}
    In the same manner as above, let $H^+ = H^+(E(f)/S^1)$ and $D = \ind D_A$ with $H^+_1, H^+_2$ and $D_1, D_2$ defined similarly for $X'$ and $\overline{\mathbb{CP}^2}$. Let $\mu$ and $\mu_1$ denote finite dimensional approximations of the families Seiberg--Witten monopole map of $E(f)$ and $E(f')$ respectively. The constant chamber for $X$ defines a chamber $\phi : S^1 \to H^+$ by $\phi = c - w_{\mathfrak{s}}$, which is non-vanishing assuming $c$ is sufficiently large. Note that $b_+( \overline{\mathbb{CP}^2} ) = 0$, so $H^+_2 = 0$ and therefore we can identify $H^+$ with $H^+_2$. Under this identification, the constant chamber for $X'$ is represented by $\phi' : S^1 \to H^+$, where $\phi' = c - w_{\mathfrak{s}'}$. We have that $\phi$ and $\phi'$ are homotopic through the homotopy $\phi_t = c - t w_{\mathfrak{s}'} - (1-t)w_{\mathfrak{s}}$ (here $c$ is taken to be large enough that $\phi_t$ is non-vanishing for all $t$). Since $f$ acts trivially on $H^2(X ; \mathbb{Z})$, the integer-valued families Seiberg--Witten invariant of $E(f)$ is defined. Similarly, so is the integer-valued invariant for $E(f')$. The families Seiberg--Witten invariant $SW^c_{X , \mathfrak{s} , \mathbb{Z}}(f)$ is related to the cohomological families Seiberg--Witten invariants by
    \[
    SW^c_{X , \mathfrak{s} , \mathbb{Z}}(f) [S^1] = \widetilde{SW}^{\phi, \mu}_{\mathbb{Z}}(1),
    \]
where $[S^1]$ is the generator of $H^1(S^1 ; \mathbb{Z})$. Similarly $SW^c_{X' , \mathfrak{s}' , \mathbb{Z}}(f') [S^1] = \widetilde{SW}^{\phi', \mu'}_{\mathbb{Z}}(1)$.

The general gluing formula \cite{tom2} states that
    \[
    \widetilde{SW}^{\phi, \mu}_{\mathbb{Z}}(1) = \widetilde{SW}_{\mathbb{Z}}^{\phi', \mu_1}(e(H^+_2)e_{S^1}(-D_2)).
    \]
    Here $e(H^+_2)$ is the ordinary Euler class of $H^+_2$ and $e_{S^1}(-D_2) \in H_{S^1}(S^1; \mathbb{Z})$ is the equivariant Euler class of $-D_2$. Since $c(\kappa)^2 = -1$ and $b_{-}(\overline{\mathbb{CP}^2}) = 1$, we have that $D_2$ is dimension zero and hence $e_{S^1}(-D_2) = 1$ by \cite{bar3}. Since $H^+_2 = 0$ we have $e(H^+_2) = 1 \in H^0(S^1 ; \mathbb{Z}) = \mathbb{Z}$. Thus $SW^c_{X  , \mathfrak{s} , \mathbb{Z}}(f) = SW_{X' , \mathfrak{s}' , \mathbb{Z}}^c(f')$
\end{proof}

\begin{remark}
There is also a family blowup formula for the zero chamber, but it is of limited use since the zero chamber requires $c(\mathfrak{s})^2 \ge 0$, but each blowup decreases $c(\mathfrak{s})^2$ by $1$.
\end{remark}

\section{Seiberg--Witten invariants of K\"ahler surfaces with $b_1 = 0$ and $b_+ = 1$}

Suppose $X$ is a compact, oriented, smooth manifold with $b_1(X) = 0$ and $b_+(X) = 1$. Suppose that $g$ is a K\"ahler metric on $X$ with K\"ahler form $\omega$. Let $\mathfrak{s}_{can}$ denote the canonical spin$^c$-structure. Then there is a bijection between line bundles and spin$^c$-structures given by $L \mapsto \mathfrak{s}_L = L \otimes \mathfrak{s}_L$. The positive spinor bundle for $\mathfrak{s}_L$ can be identified with $L \oplus (L \otimes \wedge^{0,2} T^*X)$. Since $b_+(X) = 1$, there are two chambers for the Seiberg--Witten invariants of $X$. We define the {\em K\"ahler chamber} of $X$ to be the chamber determined by taking $\eta = \lambda \omega$ where $\lambda$ is a sufficiently large positive real number. We will denote this chamber by ``+". The other chamber is given by taking $\eta = \lambda \omega$ where $\lambda$ is a sufficiently large negative real number and will be denoted by ``-". The Seiberg--Witten invariants for the two chambers will be denoted by $SW^{\pm}(X , \mathfrak{s}_L)$.

Let $K$ denote the canonical bundle of $X$. The expected dimension of the Seiberg--Witten moduli space for $\mathfrak{s}_L$ is given by $d( \mathfrak{s}_L ) = L^2 - LK$. Thus if $L^2 - LK < 0$, then $SW^+(X , \mathfrak{s}_L) = SW^-(X , \mathfrak{s}_L) = 0$. If $L^2 - LK \ge 0$, then the wall-crossing formula gives
\[
SW^+(X , \mathfrak{s}_L) - SW^-(X , \mathfrak{s}_L) = 1.
\]
By charge conjugation symmetry, we also have the identity
\[
SW^{-}(X , \mathfrak{s}_L) = - SW^{+}(X , \mathfrak{s}_{KL^*}).
\]

Since $b_1(X) = 0$ and $b_+(X) = 1$, we have $H^{0,1}(X) = H^{0,2}(X) = 0$ and thus each complex line bundle admits a holomorphic structure which is unique up to isomorphism. Let $h^i(L)$ denote the dimension of $H^i(X , L)$, where $L$ is given its unique holomorphic structure. 

\begin{proposition}\label{prop:kah1}
Let $X$ be a compact K\"ahler surface with $b_1(X) = 0$ and $b_+(X) = 1$. If $L^2 - KL < 0$, then $SW^{\pm}(X , \mathfrak{s}_L) = 0$. If $L^2 - KL \ge 0$, then exactly one of $h^0(L)$ and $h^2(L)$ is positive and we have:
\begin{itemize}
\item[(1)]{If $h^0(L) > 0$, then $SW^+(X , \mathfrak{s}_L) = 1$, $SW^-(X , \mathfrak{s}_L) = 0$.}
\item[(2)]{If $h^2(L) > 0$, then $SW^+(X , \mathfrak{s}_L) = 0$, $SW^-(X , \mathfrak{s}_L) = -1$.}
\end{itemize}
\end{proposition}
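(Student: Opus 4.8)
The plan is to reduce the Seiberg--Witten computation to the well-known structure of the Kähler monopole equations, following Witten's original analysis of solutions on Kähler surfaces (see also \cite{rub2, kmt} for analogous arguments in the $b_+=1$ setting). Recall that for a Kähler metric, the positive spinor bundle for $\mathfrak{s}_L$ splits as $L \oplus (L \otimes \wedge^{0,2}T^*X)$, and a solution $(A, \Phi)$ with $\Phi = (\alpha, \beta)$, $\alpha \in \Omega^0(L)$, $\beta \in \Omega^{0,2}(L)$, of the perturbed equations with $\eta = \lambda \omega$ satisfies, after the usual Weitzenböck manipulation, that $\alpha \bar\beta = 0$ pointwise; and integrating the curvature equation against $\omega$ shows that for $\lambda \gg 0$ one must have $\beta = 0$ and $\bar\partial_A \alpha = 0$, so $\alpha$ is a nonzero holomorphic section of $L$ (for the ``$+$'' chamber), while for $\lambda \ll 0$ one must have $\alpha = 0$ and $\beta$ a nonzero holomorphic section of $K L^{-1}$, i.e. $h^2(L) = h^0(KL^{-1}) > 0$ by Serre duality.

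From here the steps are: (i) observe that when $L^2 - KL < 0$ the expected dimension $d(\mathfrak{s}_L)$ is negative, so both invariants vanish (this is already noted in the excerpt). (ii) When $L^2 - KL \ge 0$, use the wall-crossing identity $SW^+(X,\mathfrak{s}_L) - SW^-(X,\mathfrak{s}_L) = 1$ together with charge conjugation $SW^-(X,\mathfrak{s}_L) = -SW^+(X, \mathfrak{s}_{KL^*})$ to see that the two cases $h^0(L) > 0$ and $h^2(L) > 0$ are genuinely exclusive and exhaustive: since $H^{0,1} = H^{0,2} = 0$, Riemann--Roch gives $\chi(L) = h^0(L) - h^1(L) + h^2(L) = \tfrac12(L^2 - KL) + \chi(\mathcal{O}_X) \ge 1 + \chi(\mathcal{O}_X)$; one deduces that at least one of $h^0(L), h^2(L)$ is positive, and the Kähler form argument above shows they cannot both be positive (a nonzero holomorphic section of $L$ and a nonzero holomorphic section of $KL^{-1}$ would give a nonzero section of $K$, and one checks via the degree/$\omega$-pairing that this contradicts $L^2 - KL \ge 0$ unless we are in a boundary case handled directly). (iii) In case $h^0(L) > 0$, the only solutions lie in the ``$+$'' chamber, the moduli space is a single reducible-free point (the section $\alpha$ is unique up to the gauge/scaling action once we fix the perturbation generically), so $SW^+ = \pm 1$; fixing the sign by the standard complex-orientation computation gives $SW^+ = 1$, and then wall-crossing forces $SW^- = 0$. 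Case $h^2(L) > 0$ follows by applying case (1) to $KL^{-1}$ and using charge conjugation.

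The main obstacle I anticipate is the \emph{orientation/sign bookkeeping} in step (iii): showing the count is $+1$ rather than $-1$ requires identifying the determinant line of the Seiberg--Witten deformation complex with the complex orientation coming from the holomorphic deformation theory of the pair $(L, \alpha)$, and checking that the standard conventions make this sign positive. The dimension-counting in step (ii) — verifying that exactly one of $h^0(L), h^2(L)$ is positive in the boundary case $L^2 - KL = 0$ — also needs a small separate argument, since there Riemann--Roch only gives $\chi(L) = \chi(\mathcal{O}_X)$ and one must rule out the degenerate possibility that both vanish; this is handled by noting that vanishing of both would force $SW^+ = SW^- = 0$, contradicting wall-crossing. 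Everything else is a routine transcription of Witten's Kähler computation, with the only novelty being that we are tracking the two $b_+=1$ chambers explicitly.
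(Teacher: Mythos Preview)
Your proposal has the right ingredients---the K\"ahler description of monopoles, Riemann--Roch, wall-crossing, and charge conjugation---but the logical assembly in step (iii) contains a genuine gap, and the paper's argument is organised to avoid exactly that gap.

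The paper never attempts to compute $SW^+(X,\mathfrak{s}_L)=1$ directly from the structure of the moduli space. Instead it argues only the \emph{vanishing} side: if $h^0(L)=0$ there are no effective divisors in the class $L$, hence the K\"ahler moduli space in the $+$ chamber is empty and $SW^+(X,\mathfrak{s}_L)=0$; wall-crossing then gives $SW^-=-1$. The case $h^0(L)>0$ (equivalently $h^2(L)=h^0(KL^*)=0$) is handled by applying the same emptiness argument to $KL^*$ to get $SW^+(X,\mathfrak{s}_{KL^*})=0$, then invoking charge conjugation $SW^-(X,\mathfrak{s}_L)=-SW^+(X,\mathfrak{s}_{KL^*})=0$, and finally wall-crossing to conclude $SW^+(X,\mathfrak{s}_L)=1$. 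No moduli-space count or orientation computation is ever performed on a nonempty moduli space.

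Your step (iii), by contrast, asserts that when $h^0(L)>0$ ``the moduli space is a single reducible-free point.'' This is false in general: for the K\"ahler perturbation the moduli space is identified with the linear system $|L|\cong\mathbb{P}(H^0(L))$, which has dimension $h^0(L)-1$ and is a point only when $h^0(L)=1$. Extracting the numerical invariant from a positive-dimensional (and possibly obstructed) moduli space requires exactly the excess-intersection/orientation analysis you flag as the main obstacle, and it is not ``routine transcription.'' The paper's reorganisation sidesteps this entirely.

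Two smaller points. First, mutual exclusivity of $h^0(L)>0$ and $h^2(L)>0$ follows immediately from $h^0(K)=p_g=0$ (since $b_+(X)=1$): a nonzero section of $L$ times one of $KL^*$ would give a nonzero section of $K$. Your degree/$\omega$-pairing detour is unnecessary. Second, your Riemann--Roch bound is misstated: with $\chi(\mathcal{O}_X)=1$ one gets $\chi(L)=1+\tfrac12(L^2-KL)\ge 1$, not $\ge 1+\chi(\mathcal{O}_X)$; but the conclusion $\chi(L)>0$ is all that is needed, and your alternative wall-crossing argument for ``at least one positive'' in the boundary case is correct (indeed, it already contains the key vanishing observation that would let you reorganise the whole proof along the paper's lines).
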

\begin{proof}
We have already seen that $SW^{\pm}(X , \mathfrak{s}_L) = 0$ if $L^2 - KL < 0$. Assume that $L^2 - KL \ge 0$. Riemann--Roch gives
\[
h^0(L) - h^1(L) + h^2(L) = 1 + \frac{1}{2}( L^2 - KL ) > 0.
\]
Thus at least one of $h^0(L)$ and $h^2(L)$ is positive. If $h^0(L)$ and $h^2(L) = h^0(KL^*)$ are both positive, then there are non-zero holomorphic sections of $L$ and $KL^*$. Multiplying them together gives a non-zero holomorphic section of $K$, which is impossible because $b_+(X) = 1$ implies $h^0(K) = 0$. Thus exactly one of $h^0(L)$ and $h^2(L)$ is positive.

It is well-known \cite[Chapter 12]{sal} that for a perturbation of the form $\eta = \lambda \omega$ where $\lambda$ is positive and sufficiently large, solutions of the Seiberg--Witten equations are all irreducible and correspond to effective divisors representing $c_1(L)$. If $h^0(L) = 0$, then there are no effective divisors representing $L$ and thus $SW^+(X , \mathfrak{s}_L) = 0$. Hence $SW^-(X , \mathfrak{s}_L) = -1$ by wall-crossing. If $h^2(L) = 0$, then by Serre duality $h^0(KL^*) = 0$ and hence $SW^+(X , \mathfrak{s}_{KL^*}) = 0$. Then by charge conjugation symmetry $SW^-(X , \mathfrak{s}_L) = -SW^+(X , \mathfrak{s}_{KL^*}) = 0$. Hence $SW^+(X , \mathfrak{s}_L) = 1$, by wall-crossing.
\end{proof}

On a compact K\"ahler surface $X$ with $b_1(X) = 0$ and $b_+(X) = 1$ we have the two chambers $\pm 1$. We seek to determine when the zero perturbation lies in the $+$ or $-$ chamber, or lies on the wall. A perturbation $\eta$ lies on the wall for $\mathfrak{s}_L$ if and only if $[\eta]_g =  \pi [c(\mathfrak{s}_L)]^{+_g}$. Since the K\"ahler form $\omega$ is a non-zero self-dual harmonic $2$-form, this is equivalent to saying $\int_X \eta  \wedge \omega = \pi [c(\mathfrak{s}_L)] \cdot [\omega] = 0$. Let us define $\tau(\eta) =\int_X \eta  \wedge \omega - \pi [c(\mathfrak{s}_L)] \cdot [\omega]$. Then $\eta$ lies on the wall if and only if $\tau(\eta) = 0$. If we take $\eta = \lambda \omega$ for a sufficiently large positive real number $\lambda$, then $\tau(\eta) > 0$. It follows that if $\eta$ lies in the positive chamber, the wall, or the negative chamber according to whether $\tau(\eta)$ is positive, zero, or negative. 

Define the degree $deg_\omega(L)$ of $L$ with respect to $\omega$ to be given by
\[
deg_\omega(L) = [\omega] \cdot L.
\]
For the zero perturbation $\eta = 0$, we have 
\[
\tau(0) = -\pi [c(\mathfrak{s}_L)] \cdot [\omega] = \pi( K - 2L) \cdot [\omega] = \pi( deg_\omega(K) - 2 deg_\omega(L) ).
\]
Hence we obtain:

\begin{proposition}\label{prop:kah2}
Let $X$ be a compact K\"ahler surface with $b_1(X) = 0$ and $b_+(X) = 1$. Let $L$ be a complex line bundle on $X$. If $deg_\omega(L) < (1/2)deg_\omega(K)$, then the zero perturbation lies in the $+$ chamber. If $deg_\omega(L) > (1/2)deg_\omega(K)$, then the zero perturbation lies in the $-$ chamber and if $deg_\omega(L) = (1/2)deg_\omega(K)$, then the zero perturbation lies on the wall for $\mathfrak{s}_L$.
\end{proposition}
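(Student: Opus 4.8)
\emph{Proof proposal.} The plan is to reduce the statement to the sign of the functional $\tau$ set up in the paragraph immediately preceding it, evaluated at the zero perturbation. Recall first why the sign of $\tau$ detects the chamber. Since $b_+(X)=1$, the space $H^+(X)_g$ of $g$-harmonic self-dual $2$-forms is the real line $\mathbb{R}\omega$ spanned by the Kähler form, so for the fixed Kähler metric $g$ the perturbations off the wall for $\mathfrak{s}_L$ form the preimage, under the surjective linear map $\eta\mapsto[\eta]_g$, of $\mathbb{R}\omega$ with one point removed, and hence split into two path components. These two components are distinguished by the sign of $\int_X\bigl([\eta]_g-\pi[c(\mathfrak{s}_L)]^{+_g}\bigr)\wedge\omega$, and this number equals $\tau(\eta)$ because $\int_X[\eta]_g\wedge\omega=\int_X\eta\wedge\omega$ and $\int_X[c(\mathfrak{s}_L)]^{+_g}\wedge\omega=[c(\mathfrak{s}_L)]\cdot[\omega]$, with $[\omega]^2>0$ fixing the orientation. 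Moreover $\eta=\lambda\omega$ with $\lambda\gg 0$ has $\tau(\lambda\omega)=\lambda[\omega]^2-\pi[c(\mathfrak{s}_L)]\cdot[\omega]>0$ and, by definition, represents the $+$ chamber (and $\lambda\ll 0$ the $-$ chamber); following the linear path $\eta_t=t\lambda\omega$, along which $\tau(\eta_t)=t\lambda[\omega]^2+\tau(0)$ is monotone, one sees that $(g,0)$ lies in the $+$ chamber when $\tau(0)>0$, in the $-$ chamber when $\tau(0)<0$, and on the wall when $\tau(0)=0$ — this is exactly the principle already recorded before the statement.

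It then remains to compute $\tau(0)$. From the identification of the positive spinor bundle of $\mathfrak{s}_L$ with $L\oplus(L\otimes\wedge^{0,2}T^*X)$ we get $c(\mathfrak{s}_L)=2L+c_1(\wedge^{0,2}T^*X)=2L-K$, using that $\wedge^{0,2}T^*X$ is conjugate to the canonical bundle, so $c_1(\wedge^{0,2}T^*X)=-K$. Hence
\[
\tau(0)=-\pi[c(\mathfrak{s}_L)]\cdot[\omega]=\pi(K-2L)\cdot[\omega]=\pi\bigl(deg_\omega(K)-2\,deg_\omega(L)\bigr),
\]
so $\tau(0)$ is positive, zero, or negative precisely when $deg_\omega(L)<\tfrac12 deg_\omega(K)$, $deg_\omega(L)=\tfrac12 deg_\omega(K)$, or $deg_\omega(L)>\tfrac12 deg_\omega(K)$. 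Combined with the previous paragraph, this gives the three assertions of the proposition.

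I do not anticipate a genuine obstacle: once the sign-of-$\tau$ criterion of the first paragraph is in place — and it is essentially forced by $b_+(X)=1$, hence is stated in the text just before the proposition — the result is a one-line substitution. If one wanted a fully self-contained argument, the only point worth spelling out is the path argument showing that $(g,0)$ and $(g,\lambda\omega)$ lie in the same chamber, which amounts to the monotonicity of $t\mapsto\tau(t\lambda\omega)$ noted above.
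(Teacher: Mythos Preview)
Your proposal is correct and follows essentially the same approach as the paper: the proposition is stated immediately after the computation of $\tau(0)=\pi(\deg_\omega(K)-2\deg_\omega(L))$ and the observation that the sign of $\tau$ determines the chamber, so the paper's ``proof'' is precisely the paragraph you reduce to. Your write-up simply fills in a bit more detail (the explicit identification $c(\mathfrak{s}_L)=2L-K$ and the linear-path monotonicity argument), but there is no substantive difference.
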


In general the chamber for which the zero perturbation lies might depend on the choice of K\"ahler metric. But we have seen that the zero chamber for a spin$^c$-structure $\mathfrak{s}$ is well-defined if $c(\mathfrak{s})^2 > 0$, or $c(\mathfrak{s})^2 = 0$ and $c(\mathfrak{s})$ is non-torsion. Assume that $H_1(X ; \mathbb{Z}) = 0$. If $d(\mathfrak{s}) \ge 0$ and $c(\mathfrak{s})^2 = 0$, then $c(\mathfrak{s})$ is not torsion. Indeed, if $c(\mathfrak{s})$ is torsion, then it must be zero since $H_1(X ; \mathbb{Z}) = 0$. So then $X$ is spin. Since $b_+(X) = 1$, it follows that $\sigma(X) = 0$ and then $d(\mathfrak{s}) = ( c(\mathfrak{s})^2 - \sigma(X))/4 - 1 - b_+(X) = -2 < 0$, a contradiction. Henceforth we assume that $H_1(X ; \mathbb{Z}) = 0$, $d(\mathfrak{s}) \ge 0$ and $c(\mathfrak{s})^2 \ge 0$. Then the zero chamber is a well-defined chamber. If $\mathfrak{s} = \mathfrak{s}_L$ for a line bundle $L$, then $c(\mathfrak{s}_L)^2 = (2L-K)^2 = 4(L^2 - KL) + K^2 = 4 d(\mathfrak{s}_L) + K^2$. So assuming that $d(\mathfrak{s}_L) \ge 0$, a sufficient condition for $c(\mathfrak{s}_L)^2 \ge 0$ is that $K^2 \ge 0$. Since $b_1(X) = 0$ and $b_+(X) = 1$, Noether's formula gives $K^2 = 12 - e(X) = 9 - b_-(X)$. Thus if $b_-(X) \le 9$, then $K^2 \ge 0$.

Putting Propositions \ref{prop:kah1}, \ref{prop:kah2} together, we have the following:

\begin{proposition}\label{prop:kah3}
Let $X$ be a compact K\"ahler surface with $H_1(X ; \mathbb{Z}) = 0$ and $b_+(X) = 1$. Let $L$ be a complex line bundle on $X$ with $L^2 - KL \ge 0$ and $(2L-K)^2 \ge 0$, so the zero chamber for $\mathfrak{s}_L$ is a well-defined chamber. Then we have:
\begin{itemize}
\item[(1)]{If $h^0(L) > 0$ and $0 \le deg_\omega(L) < (1/2)deg_\omega(K)$, then $SW^0(X , \mathfrak{s}_L) = 1$.}
\item[(2)]{If $h^0(L) = 0$ and $(1/2)deg_\omega(K) < deg_\omega(L) \le deg_\omega(K)$, then $SW^0(X , \mathfrak{s}_L) = -1$.}
\item[(3)]{In all other cases $SW^0(X , \mathfrak{s}_L) = 0$.}
\end{itemize}
In particular, if $SW^0(X , \mathfrak{s}_L) \neq 0$ for some $L$, then $deg_\omega(K) > 0$.
\end{proposition}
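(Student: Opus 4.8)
The plan is to obtain Proposition \ref{prop:kah3} by feeding Propositions \ref{prop:kah1} and \ref{prop:kah2} into a short case analysis; the only genuine point that needs care is that the boundary case $deg_\omega(L) = (1/2)deg_\omega(K)$ cannot occur under the stated hypotheses.

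First I would fix the K\"ahler metric $g$ with K\"ahler form $\omega$ and use that, since $b_+(X) = 1$ and $\omega$ is a non-zero self-dual harmonic $2$-form, the class $[\omega]$ spans $H^+(X)_g$. Hence the zero perturbation $\eta = 0$ lies on the wall for $\mathfrak{s}_L$ exactly when $\tau(0) = 0$, and the computation just above the statement gives $\tau(0) = \pi(deg_\omega(K) - 2\,deg_\omega(L))$. By the hypotheses $L^2 - KL \ge 0$ and $(2L-K)^2 \ge 0$ (together with $H_1(X;\mathbb{Z}) = 0$ and $b_+(X) = 1$, as explained just before the statement) the zero chamber for $\mathfrak{s}_L$ is a genuine chamber, so the zero perturbation is off the wall and therefore $deg_\omega(L) \neq (1/2)deg_\omega(K)$. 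I would also record why this does not contradict the hypotheses: were $deg_\omega(L) = (1/2)deg_\omega(K)$, then $[c(\mathfrak{s}_L)] \cdot [\omega] = 0$, so $[c(\mathfrak{s}_L)]^{+_g} = 0$ and $c(\mathfrak{s}_L)^2 \le 0$, which together with $(2L-K)^2 \ge 0$ would force $c(\mathfrak{s}_L) = 0$, contradicting well-definedness of the zero chamber. Proposition \ref{prop:kah2} now identifies which of the chambers $\pm$ contains the zero perturbation: $SW^0(X,\mathfrak{s}_L) = SW^+(X,\mathfrak{s}_L)$ if $deg_\omega(L) < (1/2)deg_\omega(K)$, and $SW^0(X,\mathfrak{s}_L) = SW^-(X,\mathfrak{s}_L)$ if $deg_\omega(L) > (1/2)deg_\omega(K)$.

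Next I would apply Proposition \ref{prop:kah1}: since $L^2 - KL \ge 0$, exactly one of $h^0(L)$, $h^2(L)$ is positive, and the pair $(SW^+(X,\mathfrak{s}_L),\,SW^-(X,\mathfrak{s}_L))$ equals $(1,0)$ when $h^0(L) > 0$ and $(0,-1)$ when $h^0(L) = 0$. Crossing this dichotomy with the one from the previous paragraph gives four cases, producing $SW^0(X,\mathfrak{s}_L) = 1$ in case (1), $SW^0(X,\mathfrak{s}_L) = -1$ in case (2), and $SW^0(X,\mathfrak{s}_L) = 0$ in every remaining combination, which is precisely case (3). I would also observe that the degree inequalities appearing in (1) and (2) beyond the comparison of $deg_\omega(L)$ with $(1/2)deg_\omega(K)$ are automatic: if $h^0(L) > 0$ then $L$ is represented by an effective divisor, so $deg_\omega(L) \ge 0$; and if $h^0(L) = 0$ then $h^2(L) = h^0(KL^*) > 0$ by Serre duality and Proposition \ref{prop:kah1}, so $KL^*$ is effective and $deg_\omega(L) \le deg_\omega(K)$. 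Thus (1), (2), (3) genuinely exhaust all possibilities. Finally, for the concluding assertion: if $SW^0(X,\mathfrak{s}_L) \neq 0$ then we are in case (1) or (2); in case (1), $0 \le deg_\omega(L) < (1/2)deg_\omega(K)$ forces $deg_\omega(K) > 0$, while in case (2), $deg_\omega(K) \le 0$ would give $deg_\omega(L) \le deg_\omega(K) \le (1/2)deg_\omega(K) < deg_\omega(L)$, which is absurd, so again $deg_\omega(K) > 0$.

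The one step that requires thought is the first, namely ruling out the wall case $deg_\omega(L) = (1/2)deg_\omega(K)$; this is delivered precisely by the standing assumption that the zero chamber is well-defined. Everything else is a mechanical combination of the two preceding propositions with two elementary observations about degrees of effective divisors.
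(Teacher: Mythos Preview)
Your proposal is correct and matches the paper's approach exactly: the paper states the proposition with no explicit proof, merely prefacing it with ``Putting Propositions \ref{prop:kah1}, \ref{prop:kah2} together'', and your argument is precisely the case analysis this phrase suppresses. Your handling of the boundary case $deg_\omega(L) = (1/2)deg_\omega(K)$ and your observation that the outer degree bounds in (1) and (2) are automatic from effectivity are the only points needing comment, and you treat both correctly.
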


Consider the case of the canonical spin$^c$-structure $\mathfrak{s}_{can}$. Specialising to $L = \mathbb{C}$, we have that $h^0(L) = 1$ and $deg_\omega(L) = 0$. Thus
\begin{corollary}\label{cor:canzero}
Let $X$ be a compact K\"ahler surface with $H_1(X ; \mathbb{Z}) = 0$ and $b_+(X) = 1$. Assume that $K^2 \ge 0$ and $deg_\omega(K) > 0$. Then the zero chamber is well-defined and $SW^0(X , \mathfrak{s}_{can}) = 1$.
\end{corollary}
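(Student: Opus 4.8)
The plan is to deduce Corollary~\ref{cor:canzero} directly from Proposition~\ref{prop:kah3} by specialising to the line bundle $L = \mathbb{C}$, which corresponds to the canonical spin$^c$-structure $\mathfrak{s}_{can} = \mathfrak{s}_{\mathbb{C}}$. So the first step is to check that the hypotheses of Proposition~\ref{prop:kah3} are satisfied for $L = \mathbb{C}$. We have $L^2 = 0$ and $KL = K \cdot \mathbb{C} = 0$, so $L^2 - KL = 0 \ge 0$. Also $(2L - K)^2 = K^2$, and we are assuming $K^2 \ge 0$, so $(2L-K)^2 \ge 0$. Hence the zero chamber for $\mathfrak{s}_{\mathbb{C}} = \mathfrak{s}_{can}$ is a well-defined chamber, as claimed.

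Next I would verify that case (1) of Proposition~\ref{prop:kah3} applies. The trivial bundle $L = \mathbb{C}$ has the constant function $1$ as a nonzero holomorphic section, so $h^0(L) \ge 1 > 0$. Moreover $\deg_\omega(L) = [\omega] \cdot L = [\omega] \cdot 0 = 0$, and by hypothesis $\deg_\omega(K) > 0$, so $0 \le \deg_\omega(L) = 0 < (1/2)\deg_\omega(K)$. Thus the conditions of case (1) are met, and Proposition~\ref{prop:kah3}(1) gives $SW^0(X, \mathfrak{s}_{\mathbb{C}}) = 1$, i.e.\ $SW^0(X, \mathfrak{s}_{can}) = 1$.

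Since the argument is a straightforward substitution into an already-proven proposition, I do not anticipate any real obstacle. The only point requiring the slightest care is the bookkeeping around the correspondence $L \mapsto \mathfrak{s}_L$: one should confirm that under the conventions set up at the start of Section~3 (where $\mathfrak{s}_{can}$ is the canonical spin$^c$-structure and $\mathfrak{s}_L = L \otimes \mathfrak{s}_{can}$), the trivial line bundle $L = \mathbb{C}$ indeed yields $\mathfrak{s}_{can}$, and that $c(\mathfrak{s}_L) = 2L - K$ so that $c(\mathfrak{s}_{can}) = -K$ and $d(\mathfrak{s}_{can}) = 0 \ge 0$. Once this identification is in place, the corollary follows immediately. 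It may also be worth remarking that the hypothesis $K^2 \ge 0$ is automatically implied by $b_-(X) \le 9$ via Noether's formula $K^2 = 9 - b_-(X)$, as noted in the discussion preceding the proposition, though this is not needed for the proof itself.
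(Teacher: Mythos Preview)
Your proposal is correct and follows essentially the same route as the paper: specialise Proposition~\ref{prop:kah3} to $L=\mathbb{C}$, verify that $h^0(L)>0$ and $\deg_\omega(L)=0<(1/2)\deg_\omega(K)$, and conclude via case~(1). The paper's argument is the same one-line specialisation, though you have helpfully spelled out the verification of the hypotheses $L^2-KL\ge 0$ and $(2L-K)^2=K^2\ge 0$ that the paper leaves implicit.
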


Let $E(1) = \mathbb{CP}^2 \# 9 \overline{\mathbb{CP}^2}$ be the blowup of $\mathbb{CP}^2$ at nine points given by the intersection of two generic cubic curves. Then $E(1)$ has the structure of a relatively minimal elliptic fibration with no multiple fibres \cite[Chapter 3]{gs}. Let $m,n > 1$ be coprime integers and let $X_{m,n} = E(1)_{m,n}$ be the elliptic surface obtained from $E(1)$ by performing logarithmic transformations of multiplicities $m,n$. Then $X$ is an elliptic surface of Kodaira dimension $1$ and is homeomorphic to $\mathbb{CP}^2 \# 9 \overline{\mathbb{CP}^2}$ \cite[Lemma 3.3.4]{gs}. Clearly $E(1)_{m,n}$ is relatively minimal and in fact minimal (see \cite[Page 23]{fm}). Furthermore, the divisibility of $[K]$ is exactly $mn-m-n$, that is, there is a primitive class $t \in H^2(X_{m,n} ; \mathbb{Z})$ such that $[K] = (mn-m-n)t$. In particular, if $m=2$ and $n > 1$ is odd, then $X_{m,n} = E(1)_{2,n}$ is a Dolgachev surface and the divisibility of $K$ is $2n-2-n = n-2$, which can be equal to any positive odd integer.

Let $F$ denote the class of a general fibre and $F_n,F_m$ the multiple fibres of multplicities $n,m$. Then $[F] = n[F_n] = m[F_m]$ and the canonical bundle is given by $[K] = -[F] + (n-1)[F_n] + (m-1)[F_m]$. Since $[F] = n[F_n]$, we get $deg_\omega(F_n) = (1/n)deg_\omega(F)$ and similarly $deg_\omega(F_m) = (1/m)deg_\omega(F)$. It follows that $deg_\omega(K) = deg_\omega(F)( 1 - 1/n - 1/m )$. Since $n,m > 1$ are coprime we have that $1 - 1/n - 1/m > 0$. Also $deg_\omega(F) > 0$ since $F$ is a non-zero effective divisor. Thus $deg_\omega(K) > 0$. We also have $K^2 = 0$ because $X_{m,n}$ is a minimal elliptic surface. Hence by Corollary \ref{cor:canzero}, the zero chamber is well-defined and $SW^0( X_{m,n} , \mathfrak{s}_{can}) = 1$.

\begin{proposition}
Let $X_{m,n} = E(1)_{m,n}$ where $m,n > 1$ are coprime. Let $L$ be a complex line bundle on $X_{m,n}$. If $d( \mathfrak{s}_L ) \ge 0$, then the zero chamber is well-defined. Moreover, if $SW^0(X_{m,n} , \mathfrak{s}_L ) \neq 0$, then $L = t K$ for some $t \in \mathbb{Q}$, $0 \le t \le 1$.
\end{proposition}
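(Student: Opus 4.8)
The plan is to reduce everything to Proposition~\ref{prop:kah3}, which already describes exactly which line bundles $L$ on a K\"ahler surface with $H_1=0$, $b_+=1$ have $SW^0(X,\mathfrak{s}_L)\neq 0$, namely those with $L^2-KL\ge 0$, $(2L-K)^2\ge 0$, and either ($h^0(L)>0$ and $0\le \deg_\omega(L)<\tfrac12\deg_\omega(K)$) or ($h^0(L)=0$ and $\tfrac12\deg_\omega(K)<\deg_\omega(L)\le \deg_\omega(K)$). For $X_{m,n}=E(1)_{m,n}$ we already know $K^2=0$ and $\deg_\omega(K)>0$, so the condition $(2L-K)^2\ge 0$ is implied by $L^2-KL\ge 0$ (since $(2L-K)^2=4(L^2-KL)+K^2=4d(\mathfrak{s}_L)\ge 0$), and the zero chamber is well-defined precisely when $d(\mathfrak{s}_L)\ge 0$; this gives the first assertion. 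It remains to show that in every case where $SW^0(X_{m,n},\mathfrak{s}_L)\neq 0$, the class $L$ is a rational multiple of $K$ (equivalently of the primitive class $t$ with $[K]=(mn-m-n)t$) lying between $0$ and $1$.

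First I would handle the case $h^0(L)>0$. Then $L$ is represented by an effective divisor $D$. The key structural fact about minimal properly elliptic surfaces is that the canonical bundle is a nonnegative rational combination of fibre components; more precisely, since $X_{m,n}$ is a relatively minimal elliptic surface with only the multiple fibres $F_m,F_n$ and no $(-1)$-curves, every effective divisor $D$ with $D^2\ge 0$ and $D\cdot K\le \text{(bounded)}$ is supported on fibres, hence is a nonnegative rational multiple of $[F]$, hence a rational multiple of $[K]$. Concretely: the constraint $d(\mathfrak{s}_L)=L^2-LK\ge 0$ together with $0\le \deg_\omega(L)<\tfrac12\deg_\omega(K)$ forces $L$ to be ``vertical'', because $[F]$ generates (over $\mathbb{Q}$) the kernel of intersection on the span of fibre classes, and the adjunction/Zariski-decomposition argument for elliptic surfaces shows any effective $D$ with $D\cdot F=0$ is a rational multiple of $F$ — while $D\cdot F>0$ together with the degree bound is excluded. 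I would run this via the Zariski decomposition of $D$ (or via \cite{fm}): write $D=P+N$, observe $D\cdot K=(mn-m-n)(\deg F)^{-1}\,\deg_\omega(\cdot)$-type computations pin $P$ to a multiple of $F$, and the minimality kills the negative part.

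For the case $h^0(L)=0$, I would use the charge-conjugation/Serre-duality symmetry already invoked in the paper: $h^0(L)=0$ with $d(\mathfrak{s}_L)\ge 0$ forces $h^2(L)=h^0(KL^*)>0$ by Riemann--Roch, so $KL^*$ is effective; apply the previous paragraph to $KL^*$ (which satisfies the mirror degree bound $0\le \deg_\omega(KL^*)<\tfrac12\deg_\omega(K)$ coming from $\tfrac12\deg_\omega(K)<\deg_\omega(L)\le\deg_\omega(K)$) to conclude $KL^*=sK$ for some rational $s\in[0,1)$, whence $L=(1-s)K$, again a rational multiple of $K$ in $[0,1]$. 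Replacing $K$ by the primitive $t$ via $[K]=(mn-m-n)t$ gives $L=t\cdot K$ for $t\in\mathbb{Q}\cap[0,1]$ in the statement's normalization (here I am reading ``$L=tK$'' as ``$[L]$ is the rational multiple $tK$ of the canonical class'', which is how the subsequent use in the paper must intend it).

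The main obstacle is the structural step: showing that an effective divisor $D$ on $X_{m,n}$ satisfying only the numerical constraints $D^2-D\cdot K\ge 0$ and the degree inequality $0\le\deg_\omega D<\tfrac12\deg_\omega K$ must be vertical (a rational multiple of the fibre class). This is where minimality of the elliptic surface and the precise description of $K$ in terms of $F,F_m,F_n$ are essential, and it will need either a Zariski-decomposition argument or a direct appeal to the classification of effective divisors on minimal properly elliptic surfaces in \cite{fm}; everything else is bookkeeping with Riemann--Roch, the wall-crossing and charge-conjugation identities, and the already-established facts $K^2=0$, $\deg_\omega K>0$.
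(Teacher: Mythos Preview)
Your setup matches the paper exactly: the zero chamber is well-defined because $(2L-K)^2 = 4(L^2-LK) + K^2 = 4d(\mathfrak{s}_L) \ge 0$, and the reduction via charge conjugation to the case $h^0(L) > 0$ together with the degree bound $0 \le \deg_\omega(L) < \tfrac{1}{2}\deg_\omega(K)$ from Proposition~\ref{prop:kah3} is the same. Where you diverge is the core step, showing $L$ is a rational multiple of $K$. You propose a structural analysis of effective divisors on the minimal elliptic surface (Zariski decomposition, verticality), and you correctly flag this as the main obstacle without resolving it.

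The paper bypasses this obstacle with a short Hodge-index argument. Since $b_+(X_{m,n}) = 1$, the orthogonal complement of $[\omega]$ in $H^2(X_{m,n};\mathbb{R})$ is negative definite. Choose $t \in [0,\tfrac{1}{2})$ with $\deg_\omega(L) = t\,\deg_\omega(K)$; then $(L - tK)\cdot\omega = 0$, so $(L-tK)^2 \le 0$ with equality only if $L = tK$ in cohomology. Expanding using $K^2 = 0$ gives $L^2 \le 2t\,LK$. Combine this with $L^2 \ge LK$ (the hypothesis) and $LK \ge 0$ ($K$ is nef on a minimal surface of Kodaira dimension $\ge 0$, and $L$ is effective) to get $LK \le L^2 \le 2t\,LK$; since $2t < 1$ this forces $L^2 = LK = 0$, hence $(L-tK)^2 = 0$ and $L = tK$.

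Your Zariski-decomposition route should ultimately succeed, but it invokes heavier machinery for what is a two-line inequality once you exploit the negative-definiteness of $\omega^\perp$. The paper's argument needs only $K^2 = 0$, nefness of $K$, effectiveness of $L$, and $b_+ = 1$; no detailed fibration or divisor classification is required.
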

\begin{proof}
Assume that $d(\mathfrak{s}_L) \ge 0$, thus $L^2 - KL \ge 0$. Then $c(\mathfrak{s}_L)^2 = (2L-K)^2 = 4(L^2 - LK) \ge 0$ where we used that $K^2 = 0$. Thus the zero chamber is well-defined. Recall that exactly one of $h^0(L)$ and $h^0(KL^*)$ is non-zero. By charge conjugation symmetry $SW^0(X_{m,n} , \mathfrak{s}_L) = -SW^0(X_{m,n} , \mathfrak{s}_{KL^*})$, it suffices to consider only the case that $h^0(L) > 0$. Then from Proposition \ref{prop:kah3}, we have $0 \le deg_\omega(L) < (1/2)deg_\omega(K)$. Hence there is a unique real number $t \in [0,1/2)$ such that $deg_\omega(L) = t deg_\omega(K)$. This gives $(L-tK) \cdot \omega = 0$. Since $L - tK$ is orthogonal to $\omega$ and $b_+(X) = 1$, it follows that $L - tK \in H^-(X)$ and hence $(L-tK)^2 \le 0$, with equality if and only if $L = tK$. Expanding $(L-tK)^2$ gives
\[
0 \ge (L - tK)^2 = L^2 - 2t LK + t^2 K^2 = L^2 - 2t LK.
\]
So $L^2 \le 2t LK$ with equality if and only if $L = tK$. Combined with $L^2 - KL \ge 0$, we get $L^2 \le 2t KL \le 2t L^2$. Since $t < 1/2$, this implies that $L^2 \le 0$. On the other hand, since $Kod(X_{m,n}) \neq -\infty$ and $X_{m,n}$ is minimal, it follows that $K$ is nef \cite[III Corollary 2.4]{bhpv}, so $KL \ge 0$. Thus $L^2 \ge KL \ge 0$. So we must have $L^2 = KL = 0$. In particular, $(L-tK)^2 = 0$ and so $L = tK$.
\end{proof}

\section{Exotic diffeomorphisms}

By a theorem of Wall \cite[Page 136]{wall}, if $X$ is a simply-connected, non-spin $4$-manifold, then $X \# (S^2 \times S^2)$ is diffeomorphic to $X \# \mathbb{CP}^2 \# \overline{\mathbb{CP}^2}$. Now suppose $X$ is a simply-connected elliptic surface. Then Mandelbaum and Moishezon proved that $X \# \mathbb{CP}^2$ decomposes into a direct sum of copies of $\mathbb{CP}^2$ and $\overline{\mathbb{CP}^2}$ \cite{man}, \cite{moi}. Hence the same is true of $X \# (S^2 \times S^2)$, provided $X$ is not spin. In particular, for $X_{m,n} = E(1)_{m,n}$, we have that $X_{m,n} \# (S^2 \times S^2)$ is diffeomorphic to $2 \mathbb{CP}^2 \# 10 \overline{\mathbb{CP}^2}$.

\begin{lemma}\label{lem:torelli}
Let $X = 2 \mathbb{CP}^2 \# 10 \overline{\mathbb{CP}}^2$ and let $\mathfrak{s}$ be any spin$^c$-structure such that $c(\mathfrak{s})^2 = 0$. Then $SW_{X , \mathfrak{s} , \mathbb{Z}}^0 : T(X) \to \mathbb{Z}$ is non-zero. More specifically, $SW_{X , \mathfrak{s} , \mathbb{Z}}^0(f)$ is odd for some $f \in T(X)$.
\end{lemma}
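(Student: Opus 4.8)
The plan is to realise $X$ as a connected sum with $S^2\times S^2$ in two different ways — once with $E(1)$ and once with a Dolgachev surface — and to build $f\in T(X)$ out of the two resulting ``reflection'' diffeomorphisms, whose $1$-parameter families invariants are then read off from the connected sum formula for $S^2\times S^2$ proved above. First I would put $\mathfrak{s}$ into a normal form. If $c(\mathfrak{s})^2=0$ then $c(\mathfrak{s})$ is characteristic, hence not divisible by $2$ (otherwise the intersection form of $X$ would be even), so its divisibility $d$ is odd; set $n=d+2$, an odd integer $\ge 3$. Then $X_{2,n}=E(1)_{2,n}$ is a Dolgachev surface with $K^2=0$ whose canonical class has divisibility $d$, and, as computed above, $SW^0(X_{2,n},\mathfrak{s}_{can})=1$. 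Both $c(\mathfrak{s})$ and the image of $c(\mathfrak{s}_{can})$ under some diffeomorphism $X_{2,n}\#(S^2\times S^2)\cong X$ (one exists by the Mandelbaum--Moishezon decomposition recalled above) are characteristic vectors of square $0$ and divisibility $d$, so they lie in the same orbit of $\mathrm{Aut}(H^2(X;\mathbb{Z}))$; since the intersection form of $X$ is indefinite, this orbit is realised by diffeomorphisms of $X$ by Wall's theorem. Hence I may fix a diffeomorphism $\beta\colon X_{2,n}\#(S^2\times S^2)\to X$ with $\beta^*\mathfrak{s}=\mathfrak{s}_{can}\#\mathfrak{s}_0$, and then (correcting the choice by a self-diffeomorphism of $X$, again via Wall) a diffeomorphism $\alpha\colon E(1)\#(S^2\times S^2)\to X$ such that $\alpha^{-1}\circ\beta$ carries the splitting $H^2(X_{2,n})\oplus H^2(S^2\times S^2)$ onto $H^2(E(1))\oplus H^2(S^2\times S^2)$; in particular $\alpha^*\mathfrak{s}=\mathfrak{s}'\#\mathfrak{s}_0$ with $d(\mathfrak{s}')=0$ and $c(\mathfrak{s}')^2=0$.

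Next I would construct $f$. Let $g$ be a diffeomorphism of $S^2\times S^2$ that is the identity near a point, with $sgn_+(g)=-1$ and $g^*=-\mathrm{id}$ on $H^2$ — for instance complex conjugation on both $\mathbb{CP}^1$ factors, whose derivative at a fixed point lies in $SO(4)$ and hence can be isotoped to the identity near that point. Put $R_1=\alpha\circ(\mathrm{id}_{E(1)}\# g)\circ\alpha^{-1}$ and $R_2=\beta\circ(\mathrm{id}_{X_{2,n}}\# g)\circ\beta^{-1}$; these are diffeomorphisms of $X$ preserving $\mathfrak{s}$, each with $sgn_+=-1$. Set $f=R_2\circ R_1^{-1}$. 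Because $\alpha^{-1}\circ\beta$ respects the two connected sum splittings and $g^*=-\mathrm{id}$, the diffeomorphisms $R_1$ and $R_2$ induce the same automorphism of $H^2(X;\mathbb{Z})$, so $f$ acts trivially on $H^2(X;\mathbb{Z})$ and $sgn_+(f)=1$; by Quinn's theorem $f$ is continuously isotopic to the identity, so $f\in T(X)$.

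Then I would carry out the computation. The connected sum formula for $S^2\times S^2$ applies to $R_1$ and $R_2$ (since $c(\mathfrak{s})^2=0\ge 0$ and $sgn_+(g)=-1$), and together with naturality of the families invariant under $\alpha,\beta$ it gives $SW^0_{X,\mathfrak{s}}(R_2)=SW^0(X_{2,n},\mathfrak{s}_{can})=1$ and $SW^0_{X,\mathfrak{s}}(R_1)=SW^0(E(1),\mathfrak{s}')$, all mod $2$. Since $E(1)$ is rational, $deg_\omega(K_{E(1)})<0$ for every K\"ahler form, so Proposition \ref{prop:kah3} forces $SW^0(E(1),\mathfrak{s}')=0$. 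Finally I would invoke additivity of the zero chamber families invariant under composition: the mapping torus of $R_2=f\circ R_1$ is the fibrewise connected sum along a fibre of $E(f)$ and $E(R_1)$, and because the zero chamber is cut out by the intrinsic perturbation $\eta=0$ (available here as $c(\mathfrak{s})^2=0$ and $c(\mathfrak{s})$ is non-torsion) the mod $2$ invariants add — this is the $b_+=2$, zero-chamber counterpart of \cite[Lemma 2.6]{rub1}, and follows from the gluing formula of \cite{tom2} specialised to $1$-parameter families. Thus $1=SW^0_{X,\mathfrak{s}}(R_2)=SW^0_{X,\mathfrak{s}}(f)+SW^0_{X,\mathfrak{s}}(R_1)=SW^0_{X,\mathfrak{s}}(f)\pmod 2$. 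As $f\in T(X)$ has $sgn_+(f)=1$, the integer invariant $SW^0_{X,\mathfrak{s},\mathbb{Z}}(f)$ is defined and reduces mod $2$ to $SW^0_{X,\mathfrak{s}}(f)=1$, so it is odd, as required; equivalently, since the zero and constant chambers coincide here, $SW^c_{X,\mathfrak{s},\mathbb{Z}}(f)$ is odd.

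The main obstacle I anticipate is the bookkeeping in the first two steps rather than the Seiberg--Witten input: one must arrange the two stabilised decompositions, and the identification $\alpha^{-1}\circ\beta$ between them, compatibly enough that $f$ genuinely lands in the Torelli group while both $R_1,R_2$ retain $sgn_+=-1$ and satisfy the hypotheses of the connected sum formula. This is precisely what forces the choice of Dolgachev surface to depend on the divisibility of $c(\mathfrak{s})$ (hence the need for Dolgachev surfaces of every odd multiplicity), and it relies on combining Wall's realisation theorem with the transitivity of $\mathrm{Aut}(H^2(X;\mathbb{Z}))$ on characteristic vectors of a fixed square and divisibility. By comparison, the two families Seiberg--Witten computations are immediate from the propositions already established, and the vanishing $SW^0(E(1),\mathfrak{s}')=0$ is immediate from the rationality of $E(1)$.
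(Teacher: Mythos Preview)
Your proposal is correct and follows essentially the same strategy as the paper: reduce to a fixed divisibility via Wall's theorems, realise $X$ simultaneously as $E(1)\#(S^2\times S^2)$ and as $E(1)_{2,d+2}\#(S^2\times S^2)$ with compatible cohomology splittings, compose the two $id\# g$ reflections to land in $T(X)$, and compute the mod $2$ families invariant via the connected-sum formula and additivity. The only cosmetic difference is that the paper kills $SW^0(E(1),\mathfrak{s}')$ by positive scalar curvature rather than by $\deg_\omega K<0$, and writes the Torelli element as $f_d\circ f_0$ rather than $R_2\circ R_1^{-1}$.
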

\begin{proof}
Note that $d(\mathfrak{s}) = -1$ if and only if $c(\mathfrak{s})^2 = 0$. Thus $\mathcal{S}(X)$ can be identified with the set of characteristics which have square zero. If $f \in T(X)$ and $g \in M(X)$ then $SW^0_{X , \mathfrak{s}}(f) = sgn_+(g) SW^0_{X , g(\mathfrak{s})}( gfg^{-1})$, so it suffices to prove the result for one spin$^c$-structure in each orbit of $M(X)$ on $\mathcal{S}(X)$.

Let $L = H^2(X ; \mathbb{Z}) \cong \mathbb{Z}^{2,10}$ denote the intersection lattice of $X$ and let $Aut(L)$ denote the automorphism group of $L$. We have that $X = X' \# (S^2 \times S^2)$, where $X' = E(1) = \mathbb{CP}^2 \# 9 \overline{\mathbb{CP}^2}$. By \cite[Theorem 2]{wall}, the map $M(X) \to Aut(L)$ is surjective. Thus orbits of $M(X)$ on $\mathcal{S}(X)$ are the same as orbits of $Aut(L)$ on the set of characteristic elements of square zero. By \cite[Theorem 4]{wall0}, the set of such orbits is determined by the divisibility (i.e. $c \in L$ has divisibility $d > 0$ if $c = dx$ for a primitive class $x \in L$). Since $X$ is not spin, every characteristic element of $L$ is non-zero.

By the above observations, it suffices to prove that for each odd $d > 0$ there exists a spin$^c$-structure $\mathfrak{s}_d \in \mathcal{S}(X)$ and a diffeomorphism $t_d \in T(X)$ such that $c(\mathfrak{s}_d)$ has divisibility $d$ and $SW_{X , \mathfrak{s}_d}(t_d)$ is odd (note that the divisibility of a characteristic of $L$ must be odd). Let $L' = H^2(E(1) ; \mathbb{Z}) \cong \mathbb{Z}^{1,9}$ be the intersection lattice of $E(1)$ and let $H = H^2( S^2 \times S^2 ; \mathbb{Z})$ be the intersection lattice of $S^2 \times S^2$, so $L = L' \oplus H$. As explained above, $E(1)_{m,n} \# (S^2 \times S^2)$ is diffeomorphic to $X$ for every $m,n > 1$ coprime. In particular, for each odd $d > 0$, there is a diffeomorphism $\psi_d : E(1)_{2,d+2} \# (S^2 \times S^2) \to X$. Making use of the surjectivity of $M(X) \to Aut(L)$, we can choose $\psi_d$ such that the induced isomorphism
\[
(\psi_d^*)^{-1} : H^2( E(1)_{2,d+2} ; \mathbb{Z}) \oplus H^2( S^2 \times S^2 ; \mathbb{Z}) \to H^2(X ; \mathbb{Z}) = L' \oplus H
\]
sends $H^2(E(1)_{2,d+2})$ to $L'$ and $H^2(S^2 \times S^2 ; \mathbb{Z})$ to $H$.

Let $\rho$ be a diffeomorphism of $S^2 \times S^2$ which acts as $-1$ on $H^2(S^2 \times S^2 ; \mathbb{Z})$ and which equals the identity in a neighbourhood of some point. Define a diffeomorphism $f_0 \in M(X)$ by setting $f_0 = id_{X'} \# \rho$ and for each odd $d > 0$, define $f_d \in M(X)$ to be $f_d = \psi_q \circ ( id_{E(1)_{2,d+2}} \# \rho ) \circ \psi_q^{-1}$. Then $sgn_+(f_0) = sgn_+(f_d) = -1$ and setting $t_d  = f_d \circ f_0$, we have that $t_d \in T(X)$. Let $\mathfrak{s}_d = (\psi_q^{-1})^*(\mathfrak{s}_{can} \# \mathfrak{s}_0)$, where $\mathfrak{s}_{can}$ is the canonical spin$^c$-structure on $E(1)_{2,d+2}$ and $\mathfrak{s}_0$ is the unique spin$^c$-structure on $S^2 \times S^2$ with $c(\mathfrak{s}_0) = 0$. Set $c_d = c(\mathfrak{s}_d)$. Then $c_d^2 = 0$ and $c_d$ has divisibility $d$ (because $c_1(E(1)_{m,n})$ has divisibility $mn-m-n$). Since $c_d \in L'$, we have that $\mathfrak{s}_d = \mathfrak{s}'_d \# \mathfrak{s}_0$ for some spin$^c$-structure $\mathfrak{s}'_d$ on $X'$. The gluing formula applied to $X = X' \# (S^2 \times S^2)$ gives
\[
SW^0_{X , \mathfrak{s}_d}(f_0) = SW^0(X' , \mathfrak{s}'_d) = 0 \; ({\rm mod} \; 2).
\]
Note that $SW^0(X ' , \mathfrak{s}'_d) = 0$ because $X' = \mathbb{CP}^2 \# 9 \overline{\mathbb{CP}^2}$ has positive scalar curvature. On the other hand the gluing formula applied to $E(1)_{2,d+2} \# (S^2 \times S^2)$ gives
\[
SW^0_{X , \mathfrak{s}_d}(f_d) = SW^0( E(1)_{2,d+2} , \mathfrak{s}_{can} ) = 1 \; ({\rm mod} \; 2).
\]
Then since $f_0$ and $f_d$ preserve $\mathfrak{s}_d$, we have
\[
SW^0_{X , \mathfrak{s}_d}( t_d ) = SW^0_{X , \mathfrak{s}_d}(f_0) + SW^0_{X , \mathfrak{s}_d}(f_d) = 0+1 = 1 \; ({\rm mod} \; 2).
\]
Thus $SW^0_{X , \mathfrak{s}_d , \mathbb{Z}}(t_d)$ is odd.
\end{proof}

\begin{theorem}
For each $n \ge 10$, the Torelli group of $X_n = 2\mathbb{CP}^2 \# n \overline{\mathbb{CP}^2}$ admits a surjective homomorphism $\Phi : T(X_n) \to \mathbb{Z}^\infty$ to a free abelian group of countably infinite rank.
\end{theorem}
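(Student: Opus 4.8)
The plan is to package the homomorphisms $SW^c_{X_n,\mathfrak{s},\mathbb{Z}}$, for $\mathfrak{s}\in\mathcal{S}(X_n)$, into a single homomorphism
\[
\Phi : T(X_n) \to \bigoplus_{\mathfrak{s}\in\mathcal{S}(X_n)} \mathbb{Z}, \qquad \Phi(f) = \bigl( SW^c_{X_n,\mathfrak{s},\mathbb{Z}}(f) \bigr)_{\mathfrak{s}},
\]
and to show that its image has infinite rank. First I would note that $\Phi$ is well defined: for a fixed $f$, compactness of the families Seiberg--Witten moduli spaces forces $SW^c_{X_n,\mathfrak{s},\mathbb{Z}}(f)=0$ for all but finitely many $\mathfrak{s}$, so $\Phi(f)$ lies in the direct sum, and $\Phi$ is a homomorphism because each $SW^c_{X_n,\mathfrak{s},\mathbb{Z}}$ is. Since $\mathcal{S}(X_n)$ is countable, the target is free abelian of countable rank, hence so is its subgroup $\mathrm{im}(\Phi)$; if $\mathrm{im}(\Phi)$ has infinite rank then it is isomorphic to $\mathbb{Z}^\infty$ and $\Phi$, corestricted to its image, is the desired surjection. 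So the theorem reduces to exhibiting elements of $T(X_n)$ whose $\Phi$-images span a subgroup of infinite rank.

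Next I would build, for each odd $d\ge 1$, a spin$^c$-structure $\mathfrak{s}_d\in\mathcal{S}(X_n)$ and a diffeomorphism $g_d\in T(X_n)$. Fixing the decomposition $X_n = X_{10}\#(n-10)\overline{\mathbb{CP}^2}$, I take from the proof of Lemma \ref{lem:torelli} the diffeomorphism $\psi_d\colon E(1)_{2,d+2}\#(S^2\times S^2)\to X_{10}$ (chosen compatibly with the homological splitting $H^2(X_{10}) = H^2(E(1))\oplus H^2(S^2\times S^2)$), the spin$^c$-structure $\mathfrak{t}_d\in\mathcal{S}(X_{10})$ with $c(\mathfrak{t}_d)$ of divisibility $d$, and the diffeomorphism $t_d = f_d\circ f_0\in T(X_{10})$ with $SW^0_{X_{10},\mathfrak{t}_d,\mathbb{Z}}(t_d)$ odd; I may assume $t_d$ is the identity near a point. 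Setting $\mathfrak{s}_d = \mathfrak{t}_d\#\kappa_1\#\cdots\#\kappa_{n-10}$ with each $c(\kappa_i)^2=-1$ and $g_d = t_d\#\mathrm{id}$, one checks $\mathfrak{s}_d\in\mathcal{S}(X_n)$, $g_d\in T(X_n)$, and that the $\mathfrak{s}_d$ are pairwise distinct since the classes $c(\mathfrak{t}_d)$ have distinct divisibilities. Applying the blowup formula $n-10$ times yields, for all odd $d,d'$,
\[
SW^c_{X_n,\mathfrak{s}_d,\mathbb{Z}}(g_{d'}) = SW^c_{X_{10},\mathfrak{t}_d,\mathbb{Z}}(t_{d'}),
\]
whose mod $2$ reduction equals $SW^0_{X_{10},\mathfrak{t}_d}(t_{d'})$ since the constant and zero chambers agree on $X_{10}$; for $d=d'$ this is odd.

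The key step, which I expect to be the main obstacle, is the triangularity statement: $SW^0_{X_{10},\mathfrak{t}_d}(t_{d'})=0$ in $\mathbb{Z}_2$ whenever $d>d'$. The plan is to write $t_{d'}=f_{d'}\circ f_0$ as in Lemma \ref{lem:torelli}; the compatibility of $\psi_d$ with the splitting gives $c(\mathfrak{t}_d)\in H^2(E(1))\subseteq H^2(X_{10})$ and $\psi_{d'}^*(c(\mathfrak{t}_d))\in H^2(E(1)_{2,d'+2})$, so $f_0$ and $f_{d'}$ each fix $\mathfrak{t}_d$ and additivity gives $SW^0_{X_{10},\mathfrak{t}_d}(t_{d'}) = SW^0_{X_{10},\mathfrak{t}_d}(f_0) + SW^0_{X_{10},\mathfrak{t}_d}(f_{d'})$. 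The first term vanishes by the $S^2\times S^2$ gluing formula together with the vanishing of the Seiberg--Witten invariants of $E(1)=\mathbb{CP}^2\#9\overline{\mathbb{CP}^2}$ (positive scalar curvature). For the second term, naturality of the invariant under conjugation by $\psi_{d'}$ and a second application of the $S^2\times S^2$ gluing formula identify it mod $2$ with the ordinary invariant $SW^0(E(1)_{2,d'+2},\mathfrak{u})$, where $c(\mathfrak{u})=\psi_{d'}^*(c(\mathfrak{t}_d))$ has divisibility $d$; but by the classification of Seiberg--Witten basic classes of the Dolgachev surfaces in the previous section, a spin$^c$-structure on $E(1)_{2,d'+2}$ with non-zero invariant has the form $\mathfrak{s}_L$ with $L=tK'$, $t\in\mathbb{Q}\cap[0,1]$, so its Chern class $(2t-1)K'$ has divisibility at most $d'$. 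Since $d>d'$ this forces $SW^0(E(1)_{2,d'+2},\mathfrak{u})=0$, completing the triangularity step.

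To finish, I would observe that for any odd $d_1<\cdots<d_k$ the integer matrix $\bigl(SW^c_{X_n,\mathfrak{s}_{d_i},\mathbb{Z}}(g_{d_j})\bigr)_{1\le i,j\le k}$ is congruent mod $2$ to an upper triangular matrix with odd diagonal entries, hence has odd and in particular non-zero determinant; therefore $\Phi(g_{d_1}),\dots,\Phi(g_{d_k})$ are linearly independent over $\mathbb{Q}$ for every $k$, so $\mathrm{im}(\Phi)$ has infinite rank and the theorem follows. Besides the triangularity step, the points requiring care are the consistent tracking of chamber conventions (constant versus zero, integral versus mod $2$) and the verification that the building-block diffeomorphisms $f_0$ and $f_{d'}$ fix the relevant spin$^c$-structures.
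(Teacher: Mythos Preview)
Your proposal is correct, but it does substantially more work than the paper's proof. The key observation you are missing is that the triangularity step is entirely unnecessary: once you know (from Lemma~\ref{lem:torelli}) that for \emph{infinitely many} distinct $\mathfrak{s}\in\mathcal{S}(X_{10})$ the coordinate homomorphism $SW^c_{X_{10},\mathfrak{s},\mathbb{Z}}\colon T(X_{10})\to\mathbb{Z}$ is non-zero, it follows immediately that $\mathrm{im}(\Phi')$ has infinite rank. Indeed, if $\mathrm{im}(\Phi')$ had finite rank it would be generated by finitely many elements of the direct sum, each of finite support, and hence $\mathrm{im}(\Phi')$ would be supported on finitely many coordinates; but Lemma~\ref{lem:torelli} produces non-zero projections to infinitely many coordinates (one for each odd divisibility $d$). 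The paper then handles $n>10$ by a one-line induction using the blowup formula, just as you do.

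Your triangularity argument (showing $SW^0_{X_{10},\mathfrak{t}_d}(t_{d'})\equiv 0\pmod 2$ for $d>d'$ via the basic-class constraint for $E(1)_{2,d'+2}$) does appear to go through, and it yields the stronger conclusion that the explicit classes $\Phi(g_d)$ are themselves $\mathbb{Q}$-linearly independent. But this refinement is not needed for the theorem as stated, and it forces you to track the compatibility of the various $\psi_{d'}$ with the splitting $L'\oplus H$, the preservation of $\mathfrak{t}_d$ by both $f_0$ and $f_{d'}$, and the additivity of the mod~$2$ invariant under composition of diffeomorphisms with $\mathrm{sgn}_+=-1$. The paper's route avoids all of this.
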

\begin{proof}
Let $\mathbb{Z}[\mathcal{S}(X_n)] = \bigoplus_{\mathfrak{s} \in \mathcal{S}(X_n)} \mathbb{Z}$. Elements of $\mathbb{Z}[\mathcal{S}(X_n)]$ can be regarded as functions $\mathcal{S}(X_n) \to \mathbb{Z}$ with finite support. Define a homomorphism $\Phi' : T(X_n) \to \bigoplus_{\mathfrak{s} \in \mathcal{S}(X_n) } \mathbb{Z}$ by taking $(\Phi'(f))(\mathfrak{s}) = SW^c_{X_n , \mathfrak{s} , \mathbb{Z}}(f)$. The function $\mathfrak{s} \mapsto SW^c_{X_n , \mathfrak{s} , \mathbb{Z}}(f)$ has finite support because of the compactness properties of the Seiberg--Witten equations.

The image $im(\Phi')$ of $\Phi'$ is a subgroup of the free abelian group $\mathbb{Z}[\mathcal{S}(X_n)]$ and hence is free abelian. Let $\Phi : T(X_n) \to im(\Phi')$ be the homomorphism obtained by factoring $\Phi'$ through its image. The theorem will follow if we can show that $im(\Phi')$ has infinite rank (since $\mathbb{Z}[\mathcal{S}(X_n)]$ has countably infinite rank, the rank of $im(\Phi')$ can be at most countably infinite). For $n=10$, Lemma \ref{lem:torelli} immediately implies that $im(\Phi')$ has countably infinite rank. For $n > 10$ we use induction on $n$ and the blow-up formula. Write $X_n = X_{n-1} \# \overline{\mathbb{CP}^2}$. Let $f \in T(X_{n-1})$. We can isotopy $f$ so that it is the identity in a neighbourhood of a point. Then we can form the connected sum diffeomorphism $f \# id_{\overline{\mathbb{CP}^2}} \in T(X_n)$. Let $\kappa$ be a spin$^c$-structure on $\overline{\mathbb{CP}^2}$ with $c(\kappa)^2 = -1$. Let $\mathfrak{s} \in \mathcal{S}(X_{n-1})$. Then $\mathfrak{s} \# \kappa \in \mathcal{S}(X_n)$. The blowup formula gives
\[
SW^c_{X_n , \mathfrak{s} \# \kappa  , \mathbb{Z}}(f  \# id_{\overline{\mathbb{CP}^2}}) = SW^c_{X_{n-1} , \mathfrak{s} , \mathbb{Z}}(f).
\]
Hence there are infinitely many spin$^c$-structures on $X_n$ for which $SW^c_{X_n , \mathfrak{s} , \mathbb{Z}} : T(X_n) \to \mathbb{Z}$ is non-zero and hence $\Phi'( T(X_n) )$ has infinite rank.
\end{proof}

\section{Non-finitely generated mapping class groups}

In this section we will prove that the mapping class group of $2\mathbb{CP}^2 \# 10 \overline{\mathbb{CP}^2}$ is not finitely generated. The strategy will be similar to the one used in \cite{bar}, but with some modifications to account for the chamber structure present in the $b_+ = 2$ case.

Let $X$ be a compact, oriented, smooth, simply-connected $4$-manifold with $b_+(X) = 2$. Recall that $\Pi$ denotes the set of pairs $h = (g , \eta)$ where $g$ is a Riemannian metric and $\eta$ is a $g$-self-dual $2$-form. We give $\Pi$ the $\mathcal{C}^\infty$-topology. Let $\mathcal{S}(X)$ be the set of isomorphism classes of spin$^c$-structures $\mathfrak{s}$ on $X$ with $d(\mathfrak{s}) = -1$. Let $\mathcal{M}(X,\mathfrak{s},h)$ denote the moduli space of the Seiberg--Witten equations on $X$ with respect to the spin$^c$-structure $\mathfrak{s} \in \mathcal{S}(X)$ and $h = (g,\eta) \in \Pi$. Say that $h \in \Pi$ is {\em regular} if $\mathcal{M}(X , \mathfrak{s} , h)$ is empty for every $\mathfrak{s} \in \mathcal{S}(X)$. The regular elements form a subset of $\Pi$ of Baire second category \cite{bar}. Let $\Pi^{reg} \subseteq \Pi$ denote the set of regular elements.

For $\mathfrak{s} \in \mathcal{S}(X)$, recall that $\Pi_{\mathfrak{s}}^*$ denotes the set of $h \in \Pi$ for which $\mathcal{M}(X,\mathfrak{s},h)$ contains no reducibles. Let $h_0, h_1 \in \Pi^{reg}$. Given a continuous path $h : [0,1] \to \Pi_{\mathfrak{s}}^*$ from $h_0$ to $h_1$, we can consider the families moduli space $\mathcal{M}(X,\mathfrak{s},h) = \cup_{t} \mathcal{M}(X, \mathfrak{s},h_t)$. For sufficiently generic paths $\mathcal{M}(X,\mathfrak{s} , h )$ is a compact, oriented, $0$-dimensional manifold. By a standard cobordism argument the number of points of $\mathcal{M}(X,\mathfrak{s},h)$ counted with sign depends only on the homotopy class of $h$, where homotopies are in the space of paths $[0,1] \to \Pi_{\mathfrak{s}}^*$ from $h_0$ to $h_1$. Since $b_+(X) = 2$, the space $\Pi_{\mathfrak{s}}^*$ is homotopy equivalent to a circle and thus there is not a unique homotopy class of path from $h_0$ to $h_1$. We get around this problem as follows: say that $h = (g,\eta) \in \Pi$ is {\em good (with respect to $\mathfrak{s}$)} if for all $t \in [0,1]$, we have $(g , t\eta) \in \Pi_{\mathfrak{s}}^*$. 

\begin{lemma}\label{lem:greg}
Assume that $b_-(X) \le 10$. For every Riemannian metric $g$, there exists a $g$-self-dual $2$-form $\eta$ such that $h = (g,\eta)$ is regular and is good with respect to every $\mathfrak{s} \in \mathcal{S}(X)$.
\end{lemma}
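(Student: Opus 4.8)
The plan is to fix a metric $g$ and understand, for each $\mathfrak{s} \in \mathcal{S}(X)$, the ``bad locus'' of self-dual $2$-forms $\eta$ that must be avoided. Recall $h=(g,\eta)$ is good with respect to $\mathfrak{s}$ iff $(g,t\eta) \in \Pi_{\mathfrak{s}}^*$ for all $t\in[0,1]$, i.e.\ iff the whole segment $\{t\eta : t\in[0,1]\}$ avoids the wall, which (for fixed $g$) is the affine hyperplane $\{\beta : [\beta]_g = \pi[c(\mathfrak{s})]^{+_g}\}$ in the space of self-dual $2$-forms. Write $w_{\mathfrak{s}} = \pi[c(\mathfrak{s})]^{+_g} \in H^+(X)_g$. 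Since $d(\mathfrak{s})=-1$ forces $c(\mathfrak{s})^2 = 10 - b_-(X) \ge 0$ under the hypothesis $b_-(X)\le 10$, and $c(\mathfrak{s})$ is never torsion on these manifolds (Rochlin, exactly as argued earlier in the excerpt), we have $w_{\mathfrak{s}}\neq 0$ for every $\mathfrak{s}\in\mathcal{S}(X)$. Hence $0$ does not lie on the wall, and the segment $\{t\eta\}$ meets the wall iff $[\eta]_g$ lies on the ray $\{s\,w_{\mathfrak{s}} : s \ge 1\}$ (writing $[\eta]_g$ in coordinates where $w_{\mathfrak{s}}$ is one basis vector and noting the condition $t[\eta]_g = w_{\mathfrak{s}}$ has a solution $t\in(0,1]$ precisely when $[\eta]_g = s\,w_{\mathfrak{s}}$ for some $s\ge 1$). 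Since $b_+(X)=2$, $H^+(X)_g$ is $2$-dimensional, so for each $\mathfrak{s}$ the set of $\eta$ that fail to be good projects under $[\,\cdot\,]_g$ to a half-line through a point of the $2$-plane $H^+(X)_g$; in particular it is a closed subset with empty interior, and its preimage under the continuous surjection $[\,\cdot\,]_g$ from the (infinite-dimensional, connected) space of $g$-self-dual $2$-forms is a closed, nowhere-dense set.

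\textbf{Key steps.} First I would record the above: for each $\mathfrak{s}\in\mathcal{S}(X)$, the set $G_{\mathfrak{s}}^g$ of $g$-self-dual $2$-forms $\eta$ which are \emph{not} good with respect to $\mathfrak{s}$ is the preimage under $[\,\cdot\,]_g$ of the half-line $R_{\mathfrak{s}} = \{s\,w_{\mathfrak{s}} : s\ge 1\}\subseteq H^+(X)_g$, hence is closed with empty interior (complement of an open dense set). Second, I would invoke that $\mathcal{S}(X)$ is countable (characteristic vectors of square $0$ in a fixed lattice form a countable set), so $\bigcup_{\mathfrak{s}} G_{\mathfrak{s}}^g$ is a countable union of nowhere-dense closed sets, hence meager. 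Third, I would use that regularity is a Baire-second-category condition: by the result quoted earlier from \cite{bar}, $\Pi^{reg}$ is residual in $\Pi$, and by the usual transversality argument the set of $g$-self-dual $2$-forms $\eta$ with $(g,\eta)\in\Pi^{reg}$ is residual in the (complete, hence Baire) space of $g$-self-dual $2$-forms. Finally, since the space of $g$-self-dual $2$-forms is a Baire space, the residual set of regular $\eta$ cannot be contained in the meager set $\bigcup_{\mathfrak{s}} G_{\mathfrak{s}}^g$; picking $\eta$ in the (nonempty) difference gives $h=(g,\eta)$ regular and good with respect to every $\mathfrak{s}\in\mathcal{S}(X)$.

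\textbf{Main obstacle.} The delicate point is making precise that ``regular for fixed $g$'' is genuinely a residual condition in the space of $g$-self-dual $2$-forms, rather than merely in the full space $\Pi$: one must check that the Sard--Smale transversality argument underlying \cite{bar} can be run with the metric held fixed and only $\eta$ varying, using that for $\mathfrak{s}\in\mathcal{S}(X)$ (where $d(\mathfrak{s})=-1<0$) a generic $\eta$ makes $\mathcal{M}(X,\mathfrak{s},h)$ empty. This is standard — on an irreducible solution the $2$-form perturbation already suffices to achieve transversality, and reducibles are excluded away from the wall — but it is the step that requires care. Everything else is soft point-set topology (Baire category) plus the elementary linear algebra describing the wall in the $2$-dimensional space $H^+(X)_g$.
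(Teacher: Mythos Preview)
Your proof is correct, but the paper takes a more quantitative route. Instead of invoking Baire category against a countable union of nowhere-dense bad loci, the paper shows that the bad loci stay uniformly away from the origin: using the lattice structure of $\mathcal{S}(X)$, only finitely many $\mathfrak{s}$ can have $\|[c(\mathfrak{s})]^{+_g}\|$ below any given bound, and since each such norm is nonzero one gets $m = \inf_{\mathfrak{s}\in\mathcal{S}(X)} \|[c(\mathfrak{s})]^{+_g}\| > 0$. Then every $\eta$ with $\|[\eta]_g\| < \pi m$ is automatically good for all $\mathfrak{s}$ simultaneously, and one finds a regular $\eta$ inside this open ball. Your approach is arguably softer and avoids the finiteness argument, but the paper's version yields more: it exhibits an explicit open neighbourhood of $\eta = 0$ consisting entirely of universally-good perturbations. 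This stronger conclusion is what is used immediately after the lemma, where the deformation $(g,\eta)\mapsto (g,(1-t)\eta)$ is applied to argue that $\Pi^{good,reg}$ is contractible --- that step needs the whole segment from $\eta$ to $0$ to stay good, which is immediate from the ball description but would require an extra remark with your Baire argument (though it does follow, since if $\eta$ is good for $\mathfrak{s}$ then so is $(1-t)\eta$, directly from your half-line characterisation of the bad set).
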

\begin{proof}
Let $g$ be any Riemannian metric. Then we have a norm on $H^2(X ; \mathbb{R})$ given by $||x||^2 = ( x^{+_g})^2 + ( x^{-_g})^2$. By identifying spin$^c$-structures with their characteristics, we regard $\mathcal{S}(X)$ as a discrete subset of $H^2(X ; \mathbb{R})$. Since $\mathcal{S}(X)$ is discrete and the unit ball $B = \{ x \in H^2(X ; \mathbb{R}) \; ||x|| \le 1 \}$ is compact, the intersection $\mathcal{S}(X) \cap B$ is finite. So there are only finitely many spin$^c$-structures $\mathfrak{s} \in \mathcal{S}(X)$ such that $|| c(\mathfrak{s}) || \le 1$. If $\mathfrak{s} \in \mathcal{S}(X)$, then $c(\mathfrak{s})^2 = 10-b_-(X)$, so $|| [c(\mathfrak{s})]^{+_g} ||^2 - || [c(\mathfrak{s}) ]^{-_g} ||^2 = 10 - b_-(X)$. Thus $|| c(\mathfrak{s}) || \le 1$ if and only if $|| [c(\mathfrak{s})]^{+_g} ||^2 \le u$, where $u = (10-b_-(X))/2 + (1/2)$. So there are finitely many spin$^c$-structures $\mathfrak{s} \in \mathcal{S}(X)$ such that $|| [ c(\mathfrak{s}) ]^{+_g} ||^2 < u$. On the other hand, since $b_-(X) \le 10$, the zero chamber is well-defined and $|| [c(\mathfrak{s}) ]^{+_g} || \neq 0$ for every $\mathfrak{s} \in \mathcal{S}(X)$. It follows that $m > 0$, where $m = inf_{\mathfrak{s} \in \mathcal{S}(X)} || [ c(\mathfrak{s}) ]^{+_g} ||$. Now if $\eta$ is any $g$-self-dual $2$-form with $|| [\eta]_g || < \pi m$, then $(g , \eta)$ is good with respect to every $\mathfrak{s} \in \mathcal{S}(X)$. But it is well known that for any given metric $g$, the set of regular perturbations for the Seiberg--Witten equations for the metric $g$ is of Baire second category. In particular, we can find a self-dual $2$-form $\eta$ such that $(g , \eta )$ is regular and $|| [\eta]_g || < \pi m$. Hence $(g,\eta)$ is regular and is good with respect to every $\mathfrak{s} \in \mathcal{S}(X)$.

\end{proof}

Let $\Pi^{good,reg}$ denote the space of $h = (g,\eta)$ which are regular and are good with respect to every $\mathfrak{s} \in \mathcal{S}(X)$. From Lemma \ref{lem:greg}, $\Pi^{good,reg}$ deformation retracts to the space of all metrics via the deformation retration $(g , \eta) \mapsto (g , (1-t)\eta)$, $t \in [0,1]$. It follows that $\Pi^{good,reg}$ is non-empty and contractible.

Now let $h_0,h_1 \in \Pi^{good,reg}$. Then for any $\mathfrak{s} \in \mathcal{S}(X)$, there is a distinguished homotopy class of path $h : [0,1] \to \Pi_{\mathfrak{s}}^*$ from $h_0$ to $h_1$, namely we require that $h_t$ is good for all $t$. We define $SW_{X , \mathfrak{s}}(h_0 , h_1 ) \in \mathbb{Z}$ to be the number of points of $\mathcal{M}(X , \mathfrak{s} , h)$ counted with sign, where $h$ is a good, sufficiently generic path from $h_0$ to $h_1$. Let $f$ be an orientation preserving diffeomorphism of $X$. Suppose that $f(\mathfrak{s}) = \mathfrak{s}$ and that $sgn_+(f) = 1$. Then for any $h_0 \in \Pi^{good,reg}$ we have an equality
\[
SW_{X , \mathfrak{s}}(h_0 , f(h_0) ) = SW_{X , \mathfrak{s} , \mathbb{Z}}^0( f ).
\]
Indeed, this follows because a path $h_t$ from $h_0$ to $f(h_0)$ defines a family of metrics and perturbations for the mapping cylinder $E(f)$ and furthermore, if $h_t$ is a path of good perturbations then it clearly defines the zero chamber. Similarly, if $f(\mathfrak{s}) = \mathfrak{s}$ and $sgn_+(f) = -1$ then we have a mod $2$ equality
\[
SW_{X , \mathfrak{s}}( h_0 , f(h_0) ) = SW_{X , \mathfrak{s}}^0(f) \; ({\rm mod} \; 2)
\]
for any $h_0 \in \Pi^{good,reg}$. Note that in the case $sgn_+(f) = -1$, the integer $SW_{X,\mathfrak{s}}(h_0 , f(h_0))$ might depend on the choice of $h_0$, but its value mod $2$ does not.

Let $\mathcal{O} \subseteq \mathcal{S}(X)$ be a subset of $\mathcal{S}(X)$ which is invariant under the action of the diffeomorphism group. Recall that $M_+(X) = \{ f \in M(X) \; | \; sgn_+(f) = 1 \}$. Following \cite{bar}, we define a homomorphism $SW_{X , \mathcal{O}} : M_+(X) \to \mathbb{Z}$ by setting
\[
SW_{X , \mathcal{O}}(f) = \sum_{\mathfrak{s} \in \mathcal{S}(X)} SW_{X , \mathfrak{s}}( h_0 , f(h_0 ) )
\]
where $h_0 \in \Pi^{good,reg}$. The fact that the right hand side is finite and does not depend on the choice of $h_0 \in \Pi^{good,reg}$ follows by the same reasoning used in \cite{bar}.

Suppose now that $b_-(X) = 10$, so that $\mathcal{S}(X)$ is the set of spin$^c$-structures $\mathfrak{s}$ satisfying $c(\mathfrak{s})^2 = 0$. Then for each odd $q > 0$, we define $\mathcal{O}_q \subset \mathcal{S}(X)$ to be the set of $\mathfrak{s} \in \mathcal{S}(X)$ such that $c(\mathfrak{s})$ has divisibility $q$, that is, $c(\mathfrak{s})$ is $q$ times a primitive element of $H^2(X ; \mathbb{Z})$. Now by adapting the arguments of \cite{bar}, we will obtain the following:

\begin{theorem}
The mapping class group of $X = 2\mathbb{CP}^2 \# 10 \overline{\mathbb{CP}^2}$ is not finitely generated. More precisely there is a surjective homomorphism $\Phi : M_+(X) \to \mathbb{Z}^\infty$.
\end{theorem}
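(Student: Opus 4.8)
\emph{Proof proposal.} The plan is to assemble the homomorphisms $SW_{X,\mathcal{O}_q}$ into one map and show that its image has infinite rank. Let $q$ range over the odd positive integers and set
\[
\Phi' : M_+(X) \to \bigoplus_{q \ \mathrm{odd}} \mathbb{Z}, \qquad \Phi'(f) = \big( SW_{X , \mathcal{O}_q}(f) \big)_{q}.
\]
This is well defined: for a fixed $f$ the function $\mathfrak{s} \mapsto SW_{X , \mathfrak{s}}(h_0 , f(h_0))$ has finite support by the usual compactness bound for the Seiberg--Witten equations, and every $\mathfrak{s} \in \mathcal{S}(X)$ lies in exactly one $\mathcal{O}_q$, so only finitely many components of $\Phi'(f)$ are non-zero. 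Since $\mathrm{im}(\Phi')$ is a subgroup of a free abelian group of countably infinite rank, it is free abelian of rank at most $\aleph_0$; once we show its rank is exactly $\aleph_0$ we may corestrict $\Phi'$ to its image to obtain a surjection $\Phi$ onto $\mathbb{Z}^\infty$. The assertion about $M(X)$ then follows from Schreier's lemma \cite{ser}: $M_+(X)$ has index $2$ in $M(X)$, so finite generation of $M(X)$ would force finite generation of $M_+(X)$, contradicting surjectivity onto $\mathbb{Z}^\infty$.

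So everything reduces to producing, for each odd $q \ge 1$, an element of $M_+(X)$ whose image under $\Phi'$ is supported in coordinates $q' \le q$ and non-zero in coordinate $q$; such vectors are in echelon form, hence $\mathbb{Q}$-linearly independent, which forces $\mathrm{im}(\Phi')$ to have infinite rank. For this I would use precisely the diffeomorphisms $t_q \in T(X) \subseteq M_+(X)$ constructed in the proof of Lemma \ref{lem:torelli}: choosing $\psi_q : E(1)_{2,q+2} \# (S^2 \times S^2) \xrightarrow{\ \sim\ } X$ so that it carries the summand splitting onto $L' \oplus H$, together with a reflection $\rho$ of $S^2 \times S^2$ equal to the identity near a point, and setting $t_q = \big(\psi_q ( \mathrm{id} \# \rho) \psi_q^{-1}\big) \circ (\mathrm{id}_{E(1)} \# \rho)$. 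Because $t_q$ is Torelli it fixes every spin$^c$-structure and $sgn_+(t_q) = 1$, so for every odd $q'$,
\[
SW_{X , \mathcal{O}_{q'}}(t_q) = \sum_{\mathfrak{s} \in \mathcal{O}_{q'}} SW^0_{X , \mathfrak{s} , \mathbb{Z}}(t_q).
\]

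The heart of the matter is the evaluation of this sum. Splitting a good path from $h_0$ to $t_q(h_0)$ at the intermediate point $(\mathrm{id}_{E(1)} \# \rho)(h_0)$ and using the mod $2$ additivity of the mapping-cylinder invariant (as in the proof of Lemma \ref{lem:torelli}) together with the two connected-sum gluing formulas of Section 2, one finds that $SW^0_{X , \mathfrak{s} , \mathbb{Z}}(t_q)$ vanishes unless $\mathfrak{s} = \psi_q(\mathfrak{s}' \# \mathfrak{s}_0)$ is of split type, in which case it is congruent mod $2$ to $SW^0(E(1), \mathfrak{s}') + SW^0(E(1)_{2,q+2}, \mathfrak{s}')$, where the first term vanishes because $E(1)$ has positive scalar curvature. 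By the classification of basic classes of Dolgachev surfaces in Section 3, a spin$^c$-structure on $E(1)_{2,q+2}$ with vanishing expected dimension and non-zero zero-chamber Seiberg--Witten invariant is some $\mathfrak{s}_L$ with $c(\mathfrak{s}_L) = (2t-1)K$, $t \in \mathbb{Q}\cap[0,1]$, so $c(\mathfrak{s}_L)$ has divisibility $|2t-1|q \le q$, with equality exactly when $t \in \{0,1\}$, i.e. $\mathfrak{s}_L \in \{\mathfrak{s}_{can}, \overline{\mathfrak{s}_{can}}\}$. Hence $SW_{X , \mathcal{O}_{q'}}(t_q) = 0$ for every $q' > q$, while for $q' = q$ the sum has exactly two terms, coming from $\mathfrak{s}_q := \psi_q(\mathfrak{s}_{can}\#\mathfrak{s}_0)$ and $\overline{\mathfrak{s}_q}$, and on the Dolgachev surface $SW^0(E(1)_{2,q+2},\mathfrak{s}_{can}) = 1$ while $SW^0(E(1)_{2,q+2},\overline{\mathfrak{s}_{can}}) = -1$.

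The point where I expect the real work to lie is exactly here: modulo $2$ these two extremal contributions cancel, so one cannot conclude with a mod $2$ count and must use the integer-valued invariant and rule out cancellation there. This is governed by the charge-conjugation symmetry of the $1$-parameter Seiberg--Witten invariant relating $SW^0_{X,\overline{\mathfrak{s}},\mathbb{Z}}(f)$ to $SW^0_{X,\mathfrak{s},\mathbb{Z}}(f)$; the sign relevant for a $1$-dimensional family (where $d(\mathfrak{s}) = -1$) works out to $+1$ rather than the naive $(-1)^{(\chi(X)+\sigma(X))/4}$ one would expect from the closed case, as in the sign analysis of \cite{bar}. Granting this, $SW_{X,\mathcal{O}_q}(t_q) = 2\,SW^0_{X,\mathfrak{s}_q,\mathbb{Z}}(t_q)$, which is a non-zero even integer (indeed $\equiv 2 \bmod 4$) since $SW^0_{X,\mathfrak{s}_q,\mathbb{Z}}(t_q)$ is odd by Lemma \ref{lem:torelli}. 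The vectors $\Phi'(t_1), \Phi'(t_3), \Phi'(t_5), \dots$ are then in echelon form and $\mathbb{Q}$-linearly independent, so $\mathrm{im}(\Phi')$ has rank $\aleph_0$ and the theorem follows. A secondary technical point to be handled is the vanishing of $SW^0_{X,\mathfrak{s},\mathbb{Z}}(t_q)$ for non-split $\mathfrak{s}$, which follows from the behaviour of the $S^2 \times S^2$ gluing formula when the corresponding spin$^c$-structure on $S^2\times S^2$ has non-zero first Chern class.
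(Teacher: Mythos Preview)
Your overall plan matches the paper's: assemble the orbit invariants $SW_{X,\mathcal{O}_q}$ into $\Phi'$, then use the diffeomorphisms $t_q = f_q f_0$ coming from $E(1)_{2,q+2}\#(S^2\times S^2)\cong X$ to show the image has infinite rank (the paper simply says ``almost identical to the proof of \cite[Theorem 3.1]{bar}, except that we use $E(1)_{2,2n+1}$ in place of $E(n)_q$'').  You also correctly isolate the real difficulty: the extremal classes $\mathfrak{s}_q,\overline{\mathfrak{s}_q}$ cancel mod $2$, so one must work integrally and control the charge-conjugation sign.

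There is, however, a genuine gap in your execution.  Your ``echelon'' claim requires the \emph{integral} vanishing $SW^0_{X,\mathfrak{s},\mathbb{Z}}(t_q)=0$ for every $\mathfrak{s}\notin\{\mathfrak{s}_q,\overline{\mathfrak{s}_q}\}$ (and in particular for all $\mathfrak{s}$ of divisibility $>q$ and all non-split $\mathfrak{s}$), but your argument goes through the decomposition $t_q=f_qf_0$ and the $S^2\times S^2$ gluing formula applied to each factor.  Both factors have $sgn_+=-1$, so the paper's gluing proposition gives only a mod-$2$ statement; there is no integer families invariant for $E(f_0)$ or $E(f_q)$ individually to upgrade it.  Moreover, for non-split $\mathfrak{s}$ (those with $c(\mathfrak{s})\notin L'$) neither $f_0$ nor $f_q$ preserves $\mathfrak{s}$, so the additivity you invoke does not even apply spin$^c$-structure by spin$^c$-structure, and no gluing formula for $t_q$ itself is available since $t_q$ is not a connected-sum diffeomorphism for any single splitting.

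The way around this (and what the reference \cite{bar} does) is to avoid computing $SW^0_{X,\mathfrak{s},\mathbb{Z}}(t_q)$ for individual $\mathfrak{s}$ altogether and instead work with the \emph{orbit-summed path invariant} $\alpha_{h_0}(g)=\sum_{\mathfrak{s}\in\mathcal{O}}SW_{X,\mathfrak{s}}(h_0,g(h_0))$ from the start.  This is an integer for any $g$ and satisfies the crossed-homomorphism identity $\alpha_{h_0}(g_1g_2)=\alpha_{h_0}(g_1)+sgn_+(g_1)\alpha_{h_0}(g_2)$, which uses only that $\mathcal{O}$ is $Diff(X)$-invariant.  Hence $SW_{X,\mathcal{O}}(t_q)=\alpha_{h_0}(f_q)-\alpha_{h_0}(f_0)$, and each term on the right can be evaluated integrally by an integral gluing formula for the path invariant (the obstruction to integrality was the non-orientability of $H_2^+$ over the \emph{circle}, which disappears for a family over $[0,1]$).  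This gives the integral echelon form directly, with the contribution from any $\mathfrak{s}$ with nontrivial $S^2\times S^2$-component automatically absorbed in the orbit sum rather than requiring a separate vanishing argument.
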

\begin{proof}
Define $\Phi' : M_+(X) \to \mathbb{Z}^\infty$ to be given by
\[
\Phi' = \bigoplus_{n=1}^{\infty} SW_{X , \mathcal{O}_{2n-1}} : M_+(X) \to \bigoplus_{n=1}^{\infty} \mathbb{Z}.
\]
To prove the result, it suffices to show that the image of $\Phi'$ is not finitely generated. Then we let $\Phi$ be the homomorphism obtained by replacing the codomain of $\Phi'$ with the image of $\Phi'$. The argument is almost identical to the proof of \cite[Theorem 3.1]{bar}, except that now we use $E(1)_{2,2n+1}$ in place of $E(n)_q$.
\end{proof}


\bibliographystyle{amsplain}

\begin{thebibliography}{99}
\bibitem{bk1}D. Baraglia, H. Konno, A gluing formula for families Seiberg-Witten invariants. {\em Geom. Topol.} {\bf 24} (2020), no. 3, 1381-1456.
\bibitem{bk2}D. Baraglia, H. Konno, On the Bauer-Furuta and Seiberg-Witten invariants of families of 4-manifolds. {\em J. Topol.} {\bf 15} (2022), no. 2, 505-586. 
\bibitem{bar2}D. Baraglia, Non-trivial smooth families of $K3$ surfaces. {\em Math. Ann.} {\bf 387} (2023), no. 3-4, 1719-1744. 
\bibitem{bar}D. Baraglia, On the mapping class groups of simply-connected smooth 4-manifolds, to appear in {\em Algebr. Geom. Topol.}, arXiv:2310.18819 (2023).
\bibitem{bar3}D. Baraglia, Constraints on families of smooth 4-manifolds from Bauer-Furuta invariants, Algebr. Geom. Topol. {\bf 21} (2021), no.~1, 317--349; MR4224743
\bibitem{bhpv}W. P. Barth, K. Hulek, C. A. M. Peters, A. Van de Ven, Compact complex surfaces. Second edition. Ergebnisse der Mathematik und ihrer Grenzgebiete. 3. Folge, vol. {\bf 4}. Springer-Verlag, Berlin, (2004). xii+436 pp.
\bibitem{fm}R. Friedman, J. W. Morgan, Smooth four-manifolds and complex surfaces. Ergebnisse der Mathematik und ihrer Grenzgebiete. 3. Folge, vol {\bf 27}. Springer-Verlag, Berlin, (1994). x+520 pp.
\bibitem{gs}R. E. Gompf, A. I. Stipsicz, {\em 4-manifolds and Kirby calculus}. Graduate Studies in Mathematics, {\bf 20}. American Mathematical Society, Providence, RI, (1999).
\bibitem{kmt}H. Konno, A. Mallick, M. Taniguchi, Exotic Dehn twists on 4-manifolds, arXiv:2306.08607 (2023).
\bibitem{kon}H. Konno, The homology of moduli spaces of $4$-manifolds may be infinitely generated, {\em Forum Math. Pi} {\bf 12} (2024), Paper No. e25, 18 pp.
\bibitem{man}R. Mandelbaum, {\em Decomposing analytic surfaces}, in Geometric topology (Proc. Georgia Topology Conf., Athens, Ga., 1977), pp. 147-217, Academic Press, New York-London, (1979). 
\bibitem{moi}B. Moishezon, {\em Complex surfaces and connected sums of complex projective planes}. With an appendix by R. Livne. Lecture Notes in Mathematics, Vol. {\bf 603}. Springer-Verlag, Berlin-New York, (1977). i+234 
\bibitem{qiu}H. Qiu, Exotic diffeomorphisms on 4-manifolds with $b_2^+ = 2$, arXiv:2409.07009 (2024).
\bibitem{qui}F. Quinn, Isotopy of $4$-manifolds, {\em J. Differential Geom.} {\bf 24} (1986), no. 3, 343-372.
\bibitem{rub1}D. Ruberman, An obstruction to smooth isotopy in dimension 4. {\em Math. Res. Lett.} {\bf 5} (1998), no. 6, 743-758. 
\bibitem{rub2}D. Ruberman, A polynomial invariant of diffeomorphisms of 4-manifolds. Proceedings of the Kirbyfest (Berkeley, CA, 1998), 473-488, Geom. Topol. Monogr., {\bf 2}, Geom. Topol. Publ., Coventry, (1999). 
\bibitem{sal}D. Salamon, Spin geometry and Seiberg--Witten invariants, unpublished notes (1999).
\bibitem{ser}\'{A}. Seress, {\em Permutation group algorithms}. Cambridge Tracts in Mathematics, {\bf 152}. Cambridge University Press, Cambridge, (2003).
\bibitem{tom1}J. Tomlin, A general connected sum formula for the families Bauer-Furuta invariant, arXiv:2510.14201 (2025).
\bibitem{tom2}J. Tomlin, A general connected sum formula for the families Seiberg-Witten invariant, arXiv:2510.17141 (2025).
\bibitem{wall0}C. T. C. Wall, On the orthogonal groups of unimodular quadratic forms. {\em Math. Ann.} {\bf 147} (1962), 328-338. 
\bibitem{wall}C. T. C. Wall, Diffeomorphisms of $4$-manifolds. {\em J. London Math. Soc.} {\bf 39} (1964), 131-140. 
\end{thebibliography}

\end{document}